\def\dx{\,{\rm dx}}
\newtheorem{theorem}{Theorem}[section]
\newtheorem{remark}[theorem]{Remark}
\newtheorem{lemma}[theorem]{Lemma}
\newcounter{mnote}
\let\oldmarginpar\marginpar
\renewcommand\marginpar[1]{\-\oldmarginpar[\raggedleft\footnotesize #1]
  {\raggedright\footnotesize #1}}
\numberwithin{equation}{section}
\setlist[enumerate]{nosep}
\def\rot{{\rm rot}}
\def\curl{{\rm curl}}
\def\dv{{\rm div}}
\def\prodg{\prod_{K\in\mathcal{G}}}
\def\sumg{\sum_{K\in\mathcal{G}}}
\def\opp{\oplus^\perp}
\def\od{\mathbf{d}}
\def\odelta{\boldsymbol{\delta}}
\def\okappa{\boldsymbol{\kappa}}
\def\fomega{\boldsymbol{\omega}}
\def\fmu{\boldsymbol{\mu}}
\def\feta{\boldsymbol{\eta}}
\def\ftau{\boldsymbol{\tau}}
\def\fpsi{\boldsymbol{\psi}}
\def\ff{\boldsymbol{f}}
\def\fW{\boldsymbol{W}}
\def\fV{\boldsymbol{V}}
\def\ixalpha{\boldsymbol{\alpha}} 
\def\ixbeta{\boldsymbol{\beta}}
\def\ixve{\boldsymbol{\varepsilon}}
\def\pddempdL{\mathbf{P}_{\od\cap\odelta}^{\rm m+\od}\Lambda}
\def\pddempdeL{\mathbf{P}_{\od\cap\odelta}^{\rm m+\odelta}\Lambda}
\def\pddedmpdL{\mathbf{P}_{\od\cap\odelta,\od}^{\rm m+\od}\Lambda}
\def\pddedempdL{\mathbf{P}_{\od\cap\odelta,\odelta}^{\rm m+\od}\Lambda}
\begin{document}

\title{A primal finite element scheme of the $\mathbf{H}(\od)\cap \mathbf{H}(\odelta)$ elliptic problem}

\author{Shuo Zhang}
\address{LSEC, Institute of Computational Mathematics and Scientific/Engineering Computing, Academy of Mathematics and System Sciences, Chinese Academy of Sciences, Beijing 100190; University of Chinese Academy of Sciences, Beijing, 100049; People's Republic of China}
\email{szhang@lsec.cc.ac.cn}

\thanks{The research is partially supported by NSFC (11871465) and CAS (XDB 41000000).}

\subjclass[2010]{Primary  47N40, 65N30} 

%
%
%
%
%

\keywords{$H(\od)\cap H(\odelta)$, primal formulation, nonconforming finite element method, non-Ciarlet type}

\begin{abstract}
In this paper, a unified family, for $n\geqslant2$ and $1\leqslant k \leqslant n-1$, of finite element schemes are presented for the primal weak formulation of the $n$-dimensional $H(\od^k)\cap H(\odelta_k)$ elliptic problem. 
\end{abstract}

\maketitle

\tableofcontents


%
%

%
%
%
\section{Introduction}

Let $\Omega\subset\mathbb{R}^n$ be a domain with Lipschitz boundary. In this paper, we study the $H(\od)\cap H(\odelta)$ elliptic problem: given $\ff\in L^2\Lambda^k(\Omega)$, find $\fomega\in H\Lambda^k(\Omega)\cap H^*_0\Lambda^k(\Omega)$, such that 
\begin{equation}\label{eq:primalhodge}
\langle\od^k\fomega,\od^k\fmu\rangle_{L^2\Lambda^{k+1}}+\langle\odelta_k\fomega,\odelta_k\fmu\rangle_{L^2\Lambda^{k-1}}+\langle\fomega,\fmu\rangle_{L^2\Lambda^k}=\langle \ff,\fmu\rangle_{L^2\Lambda^k},\ \ \forall\,\fmu\in  H\Lambda^k(\Omega)\cap H^*_0\Lambda^k(\Omega).
\end{equation} 
Here, following \cite{Arnold.D2018feec}, we denote by $\Lambda^k(\Xi)$ the space of differential $k$-forms on an $n$-dimensional domain $\Xi$, and $L^2\Lambda^k(\Xi)$ consists of differential $k$-forms with coefficients in $L^2(\Xi)$ component by component, and $\langle\cdot,\cdot\rangle_{L^2\Lambda^k(\Xi)}$ is the inner product of the Hilbert space $L^2\Lambda^k(\Xi)$. In this paper, we will occasionally drop $\Omega$ for differential forms on $\Omega$.  The exterior differential operator $\od^k:\Lambda^k(\Xi)\to \Lambda^{k+1}(\Xi)$ is unbounded from $L^2\Lambda^k(\Xi)$ to $\Lambda^{k+1}(\Xi)$. Denote, 
$$
H\Lambda^k(\Xi)=H(\od^k,\Xi):=\left\{\fomega\in L^2\Lambda^k(\Xi):\od^k\fomega\in L^2\Lambda^{k+1}(\Xi)\right\},
$$
then $H\Lambda^k(\Xi)$ is a Hilbert space with the norm $\|\fomega\|_{L^2\Lambda^k(\Xi)}+\|\od^k\fomega\|_{L^2\Lambda^{k+1}(\Xi)}$. Denote by $H_0\Lambda^k(\Xi)$ the closure of $\mathcal{C}_0^\infty\Lambda^k(\Xi)$ in $H\Lambda^k(\Xi)$. The Hodge star operator $\star$ maps $L^2\Lambda^k(\Xi)$ isomorphically to $L^2\Lambda^{n-k}(\Xi)$ for each $0\leqslant k\leqslant n$. The \emph{codifferential operator} $\odelta_k$ defined by $\odelta_k\fmu=(-1)^{kn}\star\od^{n-k}\star\fmu$ is unbounded from $L^2\Lambda^k(\Xi)$ to $L^2\Lambda^{k-1}(\Xi)$.  Denote 
$$
H^*\Lambda^k(\Xi)=H(\odelta_k,\Xi):=\left\{\fmu\in L^2\Lambda^k(\Xi):\odelta_k\fmu\in L^2\Lambda^{k-1}(\Xi)\right\},
$$
and $H^*_0\Lambda^k(\Xi)$ the closure of $\mathcal{C}_0^\infty\Lambda^k(\Xi)$ in $H^*\Lambda^k(\Xi)$. Then $H^*\Lambda^k(\Xi)=\star H\Lambda^{n-k}(\Xi)$, and $H^*_0\Lambda^k(\Xi)=\star H_0\Lambda^{n-k}(\Xi)$. The model problem \eqref{eq:primalhodge} corresponds to a strong form that 
$$
\od^k\fomega\in H^*_0\Lambda^{k+1}(\Omega),\ \ \ \odelta_k\fomega\in H\Lambda^{k-1}(\Omega),
$$
and
\begin{equation}
\odelta_{k+1}\od^k\fomega+\od^{k-1}\odelta_k\fomega+\fomega=\ff. 
\end{equation}

The model problem arises in many applied sciences, including electromagnetics\cite{Monk.P2003mono,Hiptmair.R2002acta}, fluid-structure interaction \cite{Bathe.K;Nitikitpaiboon.C;Wang.X1995cs,Bermudez.A;Rodriguez.R1994cmame,Hamdi.M;Ousset.Y;Verchery.G1978ijnme}, and others. Particularly, known as Hodge-Laplacian operator, the finite element methods associated with $\odelta_{k+1}\od^k+\od^{k-1}\odelta_k$ have been a central topic of the finite element exterior calculus (FEEC) discussed in many aspects, and we refer to \cite{Arnold.D;Falk.R;Winther.R2006acta,Arnold.D;Falk.R;Winther.R2010bams,Arnold.D2018feec} for a thorough introduction to FEEC. 
~\\

It is well recognized that, the conforming finite element scheme for \eqref{eq:primalhodge} may lead to a spurious solution that converges to a wrong limit when the exact solution $\fomega$ is not smooth enough. Indeed, taking the $H\Lambda^1\cap H^*\Lambda^1$ in two dimension for example, a piecewise polynomial subspace of $H\Lambda^1\cap H^*_0\Lambda^1$ is contained in $H^1\Lambda^1\cap H^*_0\Lambda^1$, while $H^1\Lambda^1\cap H^*_0\Lambda^1$ is a closed subspace of $H\Lambda^1\cap H^*_0\Lambda^1$, and thus the singular part of $\fomega$ can not be captured by the conforming finite element space. To cope with this situation, a well-developed approach is to use mixed finite element method. Again, the main approach can be found in detail in \cite{Arnold.D;Falk.R;Winther.R2006acta,Arnold.D;Falk.R;Winther.R2010bams,Arnold.D2018feec}; some recent progress can be found in \cite{Li.Y2019sinum,Demlow.A;Hirani.A2014} for {\it a posteriori} error estimation and adaptive methods, and in \cite{Hong.Q;Li.Y;Xu.J2022mc} for a detailed analysis of Discontinuous Galerkin (DG) methods in FEEC in the newly-presented eXtended Galerkin (XG) framework.

On the other hand, to discretize directly the primal formulation \eqref{eq:primalhodge} has been still attracting research interests. Virtual element methods are designed for the three dimensional vector potential formulation of magnetostatic problems \cite{Beiroa.L;Brezzi.F;Marini.D;Alessandro.R2018}, with the major interests restricted to cases in which the solution is reasonably smooth and hence where higher order methods could be more profitable. Nonconforming element methods and discontinuous Galerkin methods are also designed which can lead to a correct approximation of the nonsmooth solution for the $H(\curl)\cap H(\dv)$ problem in two dimension; readers are referred to \cite{Brenner.S;Sung.L;Cui.J2008} for an interior penalty method, to \cite{brenner2009quadratic} for a nonconforming finite element method, and  to \cite{Brenner.S;Cui.J;Li.F;Sung.L2008nm} for a nonconforming finite element used with inter-element penalties. Recent works also include \cite{Barker.M2022thesis,Barker.M;Cao.S;Stern.A2022arxiv,Mirebeau.J2012aml}.
~\\

In this paper, we present a unified family of nonconforming finite element schemes for the primal formulation \eqref{eq:primalhodge} for any $n\geqslant 2$ and $1\leqslant k\leqslant n-1$, and on the subdivision of the domain by simplexes. The main feature of the finite element schemes is, all the finite element functions are defined by local shape function spaces and the continuity conditions, and no penalty term or stabilization is used in the schemes. The local shape function spaces are a slightly enrichment by $\mathcal{H}^2_{\od}(T)$ (see \eqref{eq:defhd}) based on \ $\mathcal{P}_0\Lambda^k(T)+\okappa(\mathcal{P}_0\Lambda^{k+1}(T))+\star\okappa\star(\mathcal{P}_0\Lambda^{k-1}(T))$, namely the minimal local space for $\odelta_{k+1}\od^k+\od^{k-1}\odelta_k$, but they do not contain the complete linear polynomial spaces which are used in \cite{Brenner.S;Sung.L;Cui.J2008,Brenner.S;Cui.J;Li.F;Sung.L2008nm,brenner2009quadratic,Barker.M2022thesis,Barker.M;Cao.S;Stern.A2022arxiv,Mirebeau.J2012aml}. Another difference from them, particularly \cite{brenner2009quadratic,Mirebeau.J2012aml}, is that the finite element functions in this present paper possess a different kind of inter-element continuity. As precisely described in \eqref{eq:fems}, the continuity is imposed in a dual way, and the consistency error can this way be controlled. This dual way makes the finite element functions not correspond to a ``finite element" in the sense of Ciarlet's triple \cite{Ciarlet.P1978book}, and the analysis and implementation would rely on non-standard techniques. For one thing, a stable interpolator is given, which works for functions in $H\Lambda^k(\Omega)\cap H^*_0\Lambda^k(\Omega)$ with minimal regularity, and an optimal approximation error estimation can be proved. Different from classical interpolators discussed in \cite{Arnold.D;Falk.R;Winther.R2006acta,Licht.M2019mc,Christiansen.S;Winther.R2008smoothed,Falk.R;Winther.R2014mc}, the interpolator is cell-wise defined, namely for $\fomega,\fmu\in H\Lambda^k\cap H^*_0\Lambda^k$, if $\fomega$ and $\fmu$ are equal on a cell, then their interpolations are equal on this cell. Besides, a precise set of basis functions can be  presented, the supports of which are each contained in a vertex patch, and the programming of the scheme can be done in a standard routine as for the standard ``finite element" method. Indeed, locally supported basis functions have been also found and implemented for many specific non-Ciarlet type finite element spaces \cite{Fortin.M;Soulie.M1983,Park.C;Sheen.D2003,Zhang.S2020IMA,Zhang.S2021SCM,Liu.W;Zhang.S2022arxiv,Xi.Y;Ji.X;Zhang.S2020jsc,Xi.Y;Ji.X;Zhang.S2021cicp}. 

Finally we remark that, the construction of the finite element scheme is fit for quite general situations. For example, a quite the same scheme can be constructed where the local shape function space is a slight enrichment by $\mathcal{H}^2_{\odelta}(T)$ (see \eqref{eq:defhdelta}) based on \ $\mathcal{P}_0\Lambda^k(T)+\okappa(\mathcal{P}_0\Lambda^{k+1}(T))+\star\okappa\star(\mathcal{P}_0\Lambda^{k-1}(T))$ $\pddempdeL^k(T)$. The construction is also fit for other boundary conditions, such as the elliptic problems on $H_0\Lambda^k(\Omega)\cap H^*\Lambda^k(\Omega)$.
~\\

The remaining of the paper is organized as follows. In Section \ref{sec:fes}, we define the finite element spaces and prove its optimal approximation to functions in $H\Lambda^k\cap H^*_0\Lambda^k$ by constructing a cell-wise defined interpolator. In Section \ref{sec:fescheme}, we define the finite element schemes, present the error estimation and illustrate that the scheme can practically implemented by presenting a set of locally supported basis functions.

\section{Finite element space for $H\Lambda^k(\Omega)\cap H^*_0\Lambda^k(\Omega)$}

\label{sec:fes}

%

%
%
\subsection{Polynomial spaces on a simplex}

Denote the set of $k$-indices as
$$
\mathbb{IX}_{k,n}:=\{\ixalpha=(\ixalpha_1,\dots,\ixalpha_k)\in\mathbb{N}^k:1\leqslant \ixalpha_1<\ixalpha_2<\dots<\ixalpha_k\leqslant n\},\ \ \mathbb{N}\ \mbox{the\ set\ of\ integers}.
$$

Then 
$$
\od^k(\okappa(\dx^{\ixalpha_1}\wedge\dots\wedge\dx^{\ixalpha_{k+1}}))=(k+1)\dx^{\ixalpha_1}\wedge\dots\wedge\dx^{\ixalpha_{k+1}},\ \ \mbox{for}\ \ \ixalpha\in\mathbb{IX}_{k+1,n},
$$
and
$$
\odelta_k(\star\okappa\star(\dx^{\ixalpha_1}\wedge\dots\wedge \dx^{\ixalpha_{k-1}}))=(-1)^{kn-n-1} (n-k+1)(\dx^{\ixalpha_1}\wedge\dots\wedge\dx^{\ixalpha_{k-1}}),\ \ \mbox{for}\ \ \ixalpha\in\mathbb{IX}_{k-1,n},
$$
where $\okappa$ is the Koszul operator 
$$
\okappa(\dx^{\ixalpha_1}\wedge\dots\wedge\dx^{\ixalpha_k})=\sum_{j=1}^k(-1)^{j+1}x^{\ixalpha_j}\dx^{\ixalpha_1}\wedge\dots\wedge\dx^{\ixalpha_{j-1}}\wedge\dx^{\ixalpha_{j+1}}\wedge\dots\wedge\dx^{\ixalpha_k},\ \ \mbox{for}\ \ \ixalpha\in\mathbb{IX}_{k,n}.
$$

\subsubsection{Structures of polynomials on a simplex}

Given $T$ a simplex, denote on $T$ $\tilde{x}^j=x^j-c_j$ where $c_j$ is a constant such that $\int_T\tilde{x}^j=0$. Denote a simplex dependent Koszul operator 
$$
\okappa_T(\dx^{\ixalpha_1}\wedge\dots\wedge\dx^{\ixalpha_k}):=\sum_{j=1}^k(-1)^{(j+1)}\tilde{x}^{\ixalpha_j}\dx^{\ixalpha_1}\wedge\dots\dx^{\ixalpha_{j-1}}\wedge\dx^{\ixalpha_{j+1}}\wedge\dots\wedge\dx^{\ixalpha_k},\ \ \mbox{for}\ \ \ixalpha\in\mathbb{IX}_{k,n}.
$$
Then
$$
\od^{k-1}\okappa_T(\dx^{\ixalpha_1}\wedge\dots\dx^{\ixalpha_k})=k\dx^{\ixalpha_1}\wedge\dots\dx^{\ixalpha_k}. 
$$

\begin{lemma}\label{lem:pikappa}
There exists a constant $C_{k,n}$, depending on the regularity of $T$, such that 
\begin{equation}\label{eq:pimud}
\|\fmu\|_{L^2\Lambda^k(T)}\leqslant C_{k,n}h_T\|\od^k\fmu\|_{L^2\Lambda^{k+1}(T)},\ \ \mbox{for}\ \ \fmu\in\okappa_T(\mathcal{P}_0\Lambda^{k+1}(T)),
\end{equation}
and
\begin{equation}\label{eq:pimude}
\|\fmu\|_{L^2\Lambda^k(T)}\leqslant C_{k,n}h_T\|\odelta_k\fmu\|_{L^2\Lambda^{k+1}(T)}\ \ \mbox{for}\ \fmu\in\star\okappa_T\star(\mathcal{P}_0\Lambda^{k-1}(T)).  
\end{equation}
\end{lemma}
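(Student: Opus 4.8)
The plan is to reduce both estimates to a single elementary size bound for the Koszul operator acting on constant forms, exploiting the fact that $\od^k$ inverts $\okappa_T$ up to a nonzero scalar on $\okappa_T(\mathcal{P}_0\Lambda^{k+1}(T))$. I would write any such $\fmu$ as $\fmu=\okappa_T\ftau$ with $\ftau\in\mathcal{P}_0\Lambda^{k+1}(T)$ a constant form. By the $\okappa_T$-analogue of the identities preceding the lemma, which holds because passing from $x^j$ to $\tilde{x}^j=x^j-c_j$ leaves the exterior derivative unchanged (the constants $c_j$ being annihilated by $\od$), one has $\od^k\fmu=\od^k\okappa_T\ftau=(k+1)\ftau$. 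Writing $|\ftau|$ for the constant pointwise norm of $\ftau$, so that $\|\ftau\|_{L^2\Lambda^{k+1}(T)}=|\ftau|\,|T|^{1/2}$, this gives $\|\od^k\fmu\|_{L^2\Lambda^{k+1}(T)}=(k+1)|\ftau|\,|T|^{1/2}$ and reduces \eqref{eq:pimud} to the upper bound $\|\okappa_T\ftau\|_{L^2\Lambda^k(T)}\leqslant C_{k,n}h_T\,|\ftau|\,|T|^{1/2}$, the factor $k+1$ being absorbed into $C_{k,n}$.

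For this upper bound I would use that the coefficients of $\okappa_T\ftau$ are linear combinations of the centered coordinates $\tilde{x}^j$, whence pointwise $|\okappa_T\ftau(x)|\leqslant C_{k,n}\bigl(\max_j|\tilde{x}^j(x)|\bigr)\,|\ftau|$. Since $c_j$ is the average of $x^j$ over $T$ it lies between $\min_T x^j$ and $\max_T x^j$, so $|\tilde{x}^j(x)|\leqslant h_T$ on $T$; integrating the pointwise inequality over $T$ then yields $\|\okappa_T\ftau\|_{L^2\Lambda^k(T)}\leqslant C_{k,n}h_T\,|\ftau|\,|T|^{1/2}$. Combined with the previous paragraph this is exactly \eqref{eq:pimud}. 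I note that the constant produced this way in fact depends only on $k$ and $n$, the cell geometry entering only through the factor $h_T$.

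For \eqref{eq:pimude} I would argue by Hodge duality rather than recompute. If $\fmu\in\star\okappa_T\star(\mathcal{P}_0\Lambda^{k-1}(T))$, then $\star\fmu=\pm\,\okappa_T\star\ftau'$ for some constant $(k-1)$-form $\ftau'$, so $\star\fmu\in\okappa_T(\mathcal{P}_0\Lambda^{n-k+1}(T))$ and the already-proved estimate applies to $\star\fmu$ at form degree $n-k$. Since $\star$ is a pointwise $L^2$-isometry, $\|\fmu\|_{L^2\Lambda^k(T)}=\|\star\fmu\|_{L^2\Lambda^{n-k}(T)}$, while the definition $\odelta_k=(-1)^{kn}\star\od^{n-k}\star$ together with the isometry of $\star$ gives $\|\od^{n-k}\star\fmu\|_{L^2\Lambda^{n-k+1}(T)}=\|\odelta_k\fmu\|_{L^2\Lambda^{k-1}(T)}$. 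Feeding these two identities into \eqref{eq:pimud} for $\star\fmu$ produces \eqref{eq:pimude}, with $C_{k,n}$ for the second estimate taken as $C_{n-k,n}$ from the first.

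I do not anticipate a serious obstacle here: the only substantive point is the bookkeeping ensuring that $\od^k$ (respectively $\odelta_k$) acts as a scalar left inverse of $\okappa_T$ (respectively $\star\okappa_T\star$) on the constant forms, so that the right-hand side genuinely controls $\|\ftau\|$; after that, \eqref{eq:pimud} is a one-line pointwise estimate and \eqref{eq:pimude} a change of variables by $\star$. The step I would check most carefully is the degree and sign bookkeeping in the duality, namely that $\star\star=\pm\mathrm{Id}$ and the definition of $\odelta_k$ combine to give the stated norm identities with no spurious constants.
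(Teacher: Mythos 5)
Your proof is correct and follows the same basic reduction as the paper: use the fact that $\od^k$ acts as multiplication by $k+1$ on $\okappa_T(\mathcal{P}_0\Lambda^{k+1}(T))$ (unaffected by the shift $x^j\mapsto\tilde x^j$) so that the right-hand side of \eqref{eq:pimud} is equivalent to $|T|^{1/2}$ times the Euclidean norm of the coefficients, and then show the left-hand side is at most $C h_T$ times that quantity. The one substantive difference is in how that last bound is obtained: the paper computes $\|\od^k\fmu\|_{L^2\Lambda^{k+1}(T)}=\sqrt{k+1}\,|\fmu|_{H^1\Lambda^k(T)}$ and invokes the Poincar\'e inequality for the mean-zero form $\fmu$ (this is where $\int_T\tilde x^j=0$ is used), whereas you bypass Poincar\'e with the pointwise bound $|\tilde x^j|\leqslant h_T$ on $T$, which is legitimate since $c_j$ is the average of $x^j$ over $T$; your route is slightly more elementary and, as you note, yields a constant depending only on $k$ and $n$ rather than on the shape regularity of $T$. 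For \eqref{eq:pimude} the paper simply says the proof is similar (i.e.\ the mirror computation with $\odelta_k$), while you derive it from \eqref{eq:pimud} by the isometry of $\star$ and the identity $\odelta_k=(-1)^{kn}\star\od^{n-k}\star$; this is a clean and correct alternative that avoids redoing the computation, and the sign bookkeeping you flag is harmless because only norms enter.
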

\begin{proof}
Given $
\displaystyle \fmu=\sum_{\ixalpha\in\mathbb{IX}_{k+1,n}}C_{\ixalpha} \left(\sum_{j=1}^{k+1}(-1)^{j+1}\tilde x^{\ixalpha_j}\dx^{\ixalpha_1}\wedge\dx^{\ixalpha_2}\wedge\dots\wedge\dx^{\ixalpha_{j-1}}\wedge\dx^{\ixalpha_{j+1}}\wedge\dots\wedge \dx^{\ixalpha_{k+1}}\right)$, 
\begin{multline*}
|\fmu|^2_{H^1\Lambda^k(T)}=\left\|\sum_{\ixalpha\in\mathbb{IX}_{k,n}}C_{\ixalpha}\sum_{j=1}^{k+1}(-1)^{j+1}\nabla \tilde{x}^{\ixalpha_j}\dx^{\ixalpha_1}\wedge\dots\wedge\dx^{\ixalpha_{j-1}}\wedge\dx^{\ixalpha_{j+1}}\wedge\dots\wedge \dx^{\ixalpha_{k+1}}\right\|_{L^2\Lambda^k(T)}^2
\\
=\left\langle \sum_{\ixalpha\in\mathbb{IX}_{k,n}}C_{\ixalpha}\sum_{j=1}^{k+1}(-1)^{j+1}\nabla \tilde{x}^{\ixalpha_j}\dx^{\ixalpha_1}\wedge\dots\wedge\dx^{\ixalpha_{j-1}}\wedge\dx^{\ixalpha_{j+1}}\wedge\dots\wedge \dx^{\ixalpha_{k+1}}, \right.\qquad
\\
\qquad\qquad\left.\sum_{\ixalpha'\in\mathbb{IX}_{k,n}}C_{\ixalpha'}\sum_{i=1}^{k+1}(-1)^{i+1}\nabla \tilde{x}^{\ixalpha'_i}\dx^{\ixalpha'_1}\wedge\dots\wedge\dx^{\ixalpha'_{i-1}}\wedge\dx^{\ixalpha'_{i+1}}\wedge\dots\wedge \dx^{\ixalpha'_{k+1}}\right\rangle_{L^2\Lambda^k(T)}
\\
=\sum_{\ixalpha\in\mathbb{IX}_{k,n}}\sum_{\ixalpha'\in\mathbb{IX}_{k,n}} C_{\ixalpha}C_{\ixalpha'}\sum_{j=1}^{k+1}\sum_{i=1}^{k+1} (-1)^{j+i}e^{\ixalpha_j}\cdot e^{\ixalpha_i} \Bigg\langle \dx^{\ixalpha_1}\wedge\dots\wedge\dx^{\ixalpha_{j-1}}\wedge\dx^{\ixalpha_{j+1}}\wedge\dots\wedge \dx^{\ixalpha_{k+1}}, 
\\
\dx^{\ixalpha'_1}\wedge\dots\wedge\dx^{\ixalpha'_{i-1}}\wedge\dx^{\ixalpha'_{i+1}}\wedge\dots\wedge \dx^{\ixalpha'_{k+1}}\Bigg\rangle_{L^2\Lambda^k(T)}=(k+1)|T|\sum_\alpha C_\alpha^2,
\end{multline*}
and
$$
\left\|\od^k\mu\right\|_{L^2\Lambda^{k+1}(T)}^2=(k+1)^2\left\|\sum_\alpha C_\alpha \dx^{\ixalpha_1}\wedge\dx^{\ixalpha_2}\wedge\dots\wedge\dx^{\ixalpha_{k+1}}\right\|_{L^2\Lambda^{k+1}(T)}^2=(k+1)^2|T|\sum_{\ixalpha}C_{\ixalpha}^2.
$$
Namely
$$
\|\od^k\mu\|_{L^2\Lambda^{k+1}(T)}=\sqrt{k+1}|\fmu|_{H^1\Lambda^k(T)}.
$$
Therefore, by noting that $\int_T\tilde{x}^j=0$, with a constant $C_n$ depending on the regularity of $T$, we obtain
$$
\|\fmu\|_{L^2\Lambda^k(T)}\leqslant C_nh_T|\fmu|_{H^1\Lambda^k(T)}=C_n(k+1)^{-1/2}h_T\|\od^k\mu\|_{L^2\Lambda^{k+1}(T)}.
$$
This proves \eqref{eq:pimud}. Similarly can \eqref{eq:pimude} be proved. 
\end{proof}

In this part and in the sequel, we make the convention that, for $\ixalpha\in\mathbb{IX}_{k,n}$, we use $\ixbeta$ for one in $\mathbb{IX}_{n-k,n}$, such that $\ixalpha$ and $\ixbeta$ partition $\{1,2,\dots,n\}$. For $\ixalpha\in\mathbb{IX}_{k,n}$, denote 
$$
\tilde\fmu_{\odelta,T}^{\ixalpha}=\sum_{j=1}^k[(\tilde{x}^{\ixalpha_j})^2-c^{\ixalpha_j}]\dx^{\ixalpha_1}\wedge\dots\wedge\dx^{\ixalpha_k}, 
$$
and
$$
\tilde\fmu_{\od,T}^{\ixalpha}=\sum_{j=1}^{n-k}[(\tilde{x}^{\ixbeta_j})^2-c^{\ixbeta_j}]\dx^{\ixalpha_1}\wedge\dx^{\ixalpha_2}\wedge\dots\wedge\dx^{\ixalpha_k}, 
$$
where $c^{\ixalpha_j}$ and $c^{\ixbeta_j}$ are constants such that $\int_T [(\tilde x^{\ixalpha_j})^2-c^{\ixalpha_j}]=0$ and $\int_T[(\tilde x^{\ixbeta_j})^2-c^{\ixbeta_j}]=0$. Then
\begin{equation}
\od^k\tilde\fmu_{\odelta,T}^{\ixalpha}=0,\quad \odelta_k\tilde\fmu_{\odelta,T}^{\ixalpha}=(-1)^n\cdot 2\okappa_T(\dx^{\ixalpha_1}\wedge\dx^{\ixalpha_2}\wedge\dots\wedge \dx^{\ixalpha_k}),
\end{equation}

\begin{equation}
\odelta_k\tilde\fmu_{\od,T}^{\ixalpha}=0,\quad \mbox{and},\quad \od^k\tilde\fmu_{\od,T}^{\ixalpha}=2(-1)^{n(1+k)+1}\star(\okappa_T(\star (\dx^{\ixalpha_1}\wedge\dots\dx^{\ixalpha_k}))).
\end{equation}
We refer to the appendix, particularly \eqref{eq:Aodelta} and \eqref{eq:Aod}, for some detailed calculations. 

Denote 
\begin{equation}\label{eq:defhd}
\mathcal{H}^2_{\od}\Lambda^k(T):={\rm span}\{\tilde\fmu_{\od,T}^{\ixalpha}:\ixalpha\in\mathbb{IX}_{k,n}\},
\end{equation}
and
\begin{equation}\label{eq:defhdelta}
\mathcal{H}^2_{\odelta}\Lambda^k(T):={\rm span}\{\tilde\fmu_{\odelta,T}^{\ixalpha}:\ixalpha\in\mathbb{IX}_{k,n}\}.
\end{equation}

\begin{lemma}\label{lem:surjecdde}
\begin{enumerate}
\item $\od^k$ is bijective from $\okappa_T(\mathcal{P}_0\Lambda^{k+1})$ onto $\mathcal{P}_0\Lambda^{k+1}$, and bijective from $\mathcal{H}^2_{\od}\Lambda^k(T)$ onto $\star\okappa_T\star(\mathcal{P}_0\Lambda^k(T))$.
\item $\odelta_k$ is bijective from $\star\okappa_T\star(\mathcal{P}_0\Lambda^{k-1})$ onto $\mathcal{P}_0\Lambda^{k-1}$, and bijective from $\mathcal{H}^2_{\odelta}\Lambda^k(T)$ onto $\okappa_T(\mathcal{P}_0\Lambda^k(T))$.
\end{enumerate}
\end{lemma}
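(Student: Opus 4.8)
The plan is to handle all four bijectivity statements by one mechanism. Throughout, abbreviate $\dx^{\ixalpha}:=\dx^{\ixalpha_1}\wedge\cdots\wedge\dx^{\ixalpha_m}$ for $\ixalpha\in\mathbb{IX}_{m,n}$, so that $\{\dx^{\ixalpha}\}$ is the monomial basis of $\mathcal{P}_0\Lambda^m(T)$. In each of the four cases the operator in question ($\od^k$ or $\odelta_k$) carries a natural $\ixalpha$-indexed spanning family of its source to a \emph{fixed nonzero scalar} multiple of an $\ixalpha$-indexed spanning family of the claimed image, index by index. Once this is checked, bijectivity reduces to a single linear-independence statement about the target family: an index-preserving, nonzero-scalar correspondence that sends a spanning set to a linearly independent set is automatically a bijection of the two spans, and in particular forces the source family to be a basis as well. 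I will also use repeatedly that $\od$ and $\odelta_k$ annihilate constant forms, so $\okappa_T$ may be replaced by the constant-coefficient Koszul operator $\okappa$ whenever it is hit by $\od^k$ or $\odelta_k$; this imports the clean identities $\od^k\okappa=(k+1)\,\mathrm{Id}$ on $\mathcal{P}_0\Lambda^{k+1}$ and $\odelta_k\star\okappa\star=(-1)^{kn-n-1}(n-k+1)\,\mathrm{Id}$ on $\mathcal{P}_0\Lambda^{k-1}$ recorded at the start of the subsection.

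For the \emph{first} assertion of each item the target family is manifestly a basis, so nothing more is needed. Concretely, $\od^k\okappa_T(\dx^{\ixalpha})=(k+1)\,\dx^{\ixalpha}$ for $\ixalpha\in\mathbb{IX}_{k+1,n}$ shows that $\od^k$ sends the generators $\okappa_T(\dx^{\ixalpha})$ of $\okappa_T(\mathcal{P}_0\Lambda^{k+1})$ onto the basis $\{\dx^{\ixalpha}\}$ of $\mathcal{P}_0\Lambda^{k+1}$ up to the factor $k+1$; surjectivity is then immediate, and injectivity is exactly the $L^2$-control already furnished by \eqref{eq:pimud}. The statement for $\odelta_k$ on $\star\okappa_T\star(\mathcal{P}_0\Lambda^{k-1})$ is word-for-word the same after replacing $\od^k\okappa$ by $\odelta_k\star\okappa\star$ and invoking \eqref{eq:pimude} in place of \eqref{eq:pimud}.

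The two \emph{second} assertions use the explicit differentials recorded above for the generators of $\mathcal{H}^2_{\od}\Lambda^k(T)$ and $\mathcal{H}^2_{\odelta}\Lambda^k(T)$, namely $\od^k\tilde\fmu_{\od,T}^{\ixalpha}=2(-1)^{n(1+k)+1}\star\okappa_T\star(\dx^{\ixalpha})$ and $\odelta_k\tilde\fmu_{\odelta,T}^{\ixalpha}=(-1)^n 2\,\okappa_T(\dx^{\ixalpha})$, each a nonzero scalar times the corresponding target generator. It then remains only to show the target families $\{\star\okappa_T\star(\dx^{\ixalpha})\}_{\ixalpha\in\mathbb{IX}_{k,n}}$ and $\{\okappa_T(\dx^{\ixalpha})\}_{\ixalpha\in\mathbb{IX}_{k,n}}$ are linearly independent. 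For the latter this is the injectivity of $\okappa_T$ on $\mathcal{P}_0\Lambda^k(T)$, which follows from $\od^{k-1}\okappa_T=k\,\mathrm{Id}$ on $\mathcal{P}_0\Lambda^k(T)$ (the displayed $\okappa_T$-identity), valid because $k\geqslant 1$; for the former, $\star$ is an isomorphism, so independence reduces to injectivity of $\okappa_T$ on $\mathcal{P}_0\Lambda^{n-k}(T)$, i.e.\ the same identity at degree $n-k$, which needs $n-k\geqslant 1$, that is $k\leqslant n-1$. With the targets independent, the index-preserving nonzero-scalar correspondence forces $\{\tilde\fmu_{\od,T}^{\ixalpha}\}$ and $\{\tilde\fmu_{\odelta,T}^{\ixalpha}\}$ to be bases of $\mathcal{H}^2_{\od}\Lambda^k(T)$ and $\mathcal{H}^2_{\odelta}\Lambda^k(T)$, each of dimension $\binom{n}{k}$, and $\od^k$, $\odelta_k$ to be the claimed bijections.

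The one genuine point to watch is precisely this independence step: a priori the spanning families defining $\mathcal{H}^2_{\od}\Lambda^k(T)$ and $\mathcal{H}^2_{\odelta}\Lambda^k(T)$ could be dependent, so the source dimension may not be assumed to equal $\binom{n}{k}$ in advance; instead it is \emph{deduced} from the independence of the targets, which is exactly where the hypotheses $k\geqslant 1$ and $k\leqslant n-1$ enter, through the non-degeneracy of $\okappa_T$ on constant forms of the two complementary degrees. Everything else is bookkeeping. I also expect the two items to be exact Hodge-star duals of one another, so that the second may alternatively be obtained from the first by conjugating with $\star$ and using $\odelta_k=(-1)^{kn}\star\od^{n-k}\star$; I would present the direct argument above as the primary route and mention the duality only as a cross-check.
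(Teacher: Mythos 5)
Your proposal is correct; note that the paper states Lemma \ref{lem:surjecdde} without any proof, evidently regarding it as an immediate consequence of the identities $\od^k\okappa_T=(k+1)\,\mathrm{Id}$ on $\mathcal{P}_0\Lambda^{k+1}$, $\odelta_k\star\okappa\star=(-1)^{kn-n-1}(n-k+1)\,\mathrm{Id}$ on $\mathcal{P}_0\Lambda^{k-1}$, and the computed differentials of $\tilde\fmu_{\od,T}^{\ixalpha}$ and $\tilde\fmu_{\odelta,T}^{\ixalpha}$ recorded just before it, which is exactly what you flesh out. Your explicit attention to the linear independence of the target families $\{\okappa_T(\dx^{\ixalpha})\}$ and $\{\star\okappa_T\star(\dx^{\ixalpha})\}$ (via injectivity of $\okappa_T$ on constant forms of degrees $k$ and $n-k$, using $1\leqslant k\leqslant n-1$) is the one step the paper leaves entirely implicit, and it is handled correctly.
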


\begin{lemma}\label{lem:pih}
There exists a constant $C_{k,n}$, depending on the regularity of $T$, such that
\begin{equation}\label{eq:hfhde}
\|\fmu\|_{L^2\Lambda^k(T)}\leqslant C_{k,n}h_T\|\odelta_k\fmu\|_{L^2\Lambda^{k-1}(T)},\ \ \mbox{for}\ \fmu\in \mathcal{H}^2_{\odelta}\Lambda^k(T),
\end{equation}
and
\begin{equation}\label{eq:hfhd}
\|\fmu\|_{L^2\Lambda^k(T)}\leqslant C_{k,n}h_T\|\od^k\fmu\|_{L^2\Lambda^{k+1}(T)},\ \ \mbox{for}\ \fmu\in \mathcal{H}^2_{\od}\Lambda^k(T).
\end{equation}
\end{lemma}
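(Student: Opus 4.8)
The plan is to prove the two estimates in a unified way: reduce one to the other by Hodge duality, and establish the remaining one by an explicit computation combined with an inverse inequality.

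First I would observe that \eqref{eq:hfhde} and \eqref{eq:hfhd} are Hodge-dual to each other, so only one needs a direct proof. Since $\star$ is an $L^2$ isometry and $\star\tilde\fmu^{\ixalpha}_{\od,T}$ equals, up to sign, exactly $\tilde\fmu^{\ixbeta}_{\odelta,T}$ at form degree $n-k$ (with $\ixbeta$ the complement of $\ixalpha$), the Hodge star maps $\mathcal{H}^2_{\od}\Lambda^k(T)$ isometrically onto $\mathcal{H}^2_{\odelta}\Lambda^{n-k}(T)$. Because $\odelta_{n-k}\star=\pm\star\od^k$ on $k$-forms, applying \eqref{eq:hfhde} at form degree $n-k$ to $\star\fmu$ and using $\|\star\,\cdot\|=\|\cdot\|$ yields \eqref{eq:hfhd} at once. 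I would therefore concentrate on \eqref{eq:hfhde}.

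For \eqref{eq:hfhde}, write $\fmu=\sum_{\ixalpha}C_{\ixalpha}\tilde\fmu^{\ixalpha}_{\odelta,T}$ and set $\dx^{\ixalpha}:=\dx^{\ixalpha_1}\wedge\dots\wedge\dx^{\ixalpha_k}$. Since the basis forms $\{\dx^{\ixalpha}\}$ are pairwise $L^2$-orthogonal and the scalar coefficient $\sum_{j=1}^k[(\tilde x^{\ixalpha_j})^2-c^{\ixalpha_j}]$ is bounded pointwise by $C_{k,n}h_T^2$, a direct computation gives $\|\fmu\|^2_{L^2\Lambda^k(T)}\leqslant C_{k,n}h_T^4|T|\sum_{\ixalpha}C_{\ixalpha}^2$. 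The key step is then to feed the structure relations already recorded in the excerpt, $\odelta_k\tilde\fmu^{\ixalpha}_{\odelta,T}=(-1)^n2\,\okappa_T(\dx^{\ixalpha})$ and $\od^{k-1}\okappa_T(\dx^{\ixalpha})=k\,\dx^{\ixalpha}$, into the composition $\od^{k-1}\odelta_k$: this produces $\od^{k-1}\odelta_k\fmu=(-1)^n2k\sum_{\ixalpha}C_{\ixalpha}\dx^{\ixalpha}$, whose norm decouples exactly as $\|\od^{k-1}\odelta_k\fmu\|^2_{L^2\Lambda^{k}(T)}=4k^2|T|\sum_{\ixalpha}C_{\ixalpha}^2$. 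Combining the two displays eliminates the coefficients and the factor $|T|$, leaving $\|\fmu\|_{L^2\Lambda^k(T)}\leqslant C_{k,n}h_T^2\|\od^{k-1}\odelta_k\fmu\|_{L^2\Lambda^{k}(T)}$.

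The main obstacle is that this bounds $\fmu$ by $\od^{k-1}\odelta_k\fmu$ rather than by $\odelta_k\fmu$ itself, and one cannot simply invoke Lemma \ref{lem:pikappa}: its estimate \eqref{eq:pimud}, applied to $\odelta_k\fmu\in\okappa_T(\mathcal{P}_0\Lambda^k(T))$, controls $\|\odelta_k\fmu\|$ from above by $h_T\|\od^{k-1}\odelta_k\fmu\|$, which is the wrong direction. The reason the direct route fails is that $\|\odelta_k\fmu\|^2$ carries genuine cross terms between distinct indices $\ixalpha$ (two $k$-indices sharing $k-1$ entries yield the same $(k-1)$-form under $\okappa_T$), whereas post-composing with $\od^{k-1}$ returns the Koszul forms to the pairwise orthogonal constants $\dx^{\ixalpha}$, which is precisely what makes the computation above exact. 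To recover the correct direction I would invoke the inverse inequality $\|\od^{k-1}\nu\|_{L^2\Lambda^k(T)}\leqslant C_{k,n}h_T^{-1}\|\nu\|_{L^2\Lambda^{k-1}(T)}$, valid for polynomial forms $\nu$ on a shape-regular simplex, with $\nu=\odelta_k\fmu$; this gives the lower bound $\|\odelta_k\fmu\|_{L^2\Lambda^{k-1}(T)}\geqslant c_{k,n}h_T\|\od^{k-1}\odelta_k\fmu\|_{L^2\Lambda^{k}(T)}$. Chaining it with the previous display gives $\|\fmu\|_{L^2\Lambda^k(T)}\leqslant C_{k,n}h_T^2\|\od^{k-1}\odelta_k\fmu\|_{L^2\Lambda^{k}(T)}\leqslant C'_{k,n}h_T\|\odelta_k\fmu\|_{L^2\Lambda^{k-1}(T)}$, which is \eqref{eq:hfhde}; the dependence on the regularity of $T$ enters only through the constant of the inverse inequality. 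With \eqref{eq:hfhde} in hand, the Hodge-duality reduction of the first paragraph delivers \eqref{eq:hfhd}, completing the proof.
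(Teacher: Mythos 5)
Your proof is correct and follows essentially the same route as the paper's: the core of the argument for \eqref{eq:hfhde} --- expanding $\fmu$ in the basis $\tilde\fmu^{\ixalpha}_{\odelta,T}$, bounding $\|\fmu\|^2_{L^2\Lambda^k(T)}\leqslant C_{k,n}h_T^4|T|\sum_{\ixalpha}C_{\ixalpha}^2$, computing $\od^{k-1}\odelta_k\fmu=(-1)^n2k\sum_{\ixalpha}C_{\ixalpha}\dx^{\ixalpha}$ exactly, and closing with the inverse inequality applied to $\odelta_k\fmu$ --- is precisely the paper's proof. The only (harmless) variations are that you obtain the first bound by a pointwise estimate on the coefficients rather than the paper's Poincar\'e-plus-gradient computation, and that you derive \eqref{eq:hfhd} from \eqref{eq:hfhde} by Hodge duality where the paper simply asserts the mirror computation ``similarly''; both are sound.
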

\begin{proof}
For $\displaystyle\fmu=\sum_{\ixve\in\mathbb{IX}_{k,n}}C_{\ixve}\tilde\fmu_{\odelta,T}^{\ixve}$,
$$
\|\fmu\|_{L^2\Lambda^k(T)}^2\leqslant C_{k,n}h_T^2\|\sum_{\ixve}C_{\ixve}\sum_{1\leqslant j\leqslant k}\nabla(\tilde{x}^{\ixve_j})^2\dx^{\ixve_1}\wedge\dx^{\ixve_2}\wedge\dots\wedge\dx^{\ixve_k}\|_{L^2\Lambda^k(T)}^2\leqslant C_{k,n}h_T^4|T|\sum_{\ixve}C_{\ixve}^2.
$$
Note that
$$
\odelta_k\fmu=2(-1)^n\sum_{\ixve}C_{\ixve}\okappa_T(\dx^{\ixve_1}\wedge\dots\wedge\dx^{\ixve_k})
$$
and 
$$
\od^{k-1}\odelta_k\fmu=2k(-1)^n\sum_{\ixve\in\mathbb{IX}_{k,n}}C_{\ixve}\dx^{\ixve_1}\wedge\dots\wedge\dx^{\ixve_k}.
$$
Therefore, by the inverse inequality,
$$
4k^2h_T^2|T|\sum_{\ixve\in\mathbb{IX}_{k,n}}C_{\ixve}^2=h_T^2\|\od^{k-1}\odelta_k\fmu\|_{L^2\Lambda^k(T)}^2\leqslant C_{k,n}\|\odelta_k\fmu\|_{L^2\Lambda^{k-1}(T)}^2.
$$
The proof of \eqref{eq:hfhde} is thus completed. Similarly can \eqref{eq:hfhd} be proved. 
\end{proof}

\begin{lemma}
There exists a constant $C_{k,n}$, depending on the regularity of $T$, such that
\begin{equation}
\|\fmu\|_{L^2\Lambda^k(T)}\leqslant C_{k,n}h_T\|\odelta_k\fmu\|_{L^2\Lambda^{k-1}(T)},\ \ \mbox{for}\ \fmu\in \star\okappa_T\star(\mathcal{P}_0\Lambda^{k-1}(T))+\mathcal{H}^2_{\odelta}\Lambda^k(T),
\end{equation}
and
\begin{equation}
\|\fmu\|_{L^2\Lambda^k(T)}\leqslant C_{k,n}h_T\|\od^k\fmu\|_{L^2\Lambda^{k+1}(T)},\ \ \mbox{for}\ \fmu\in \okappa_T(\mathcal{P}_0\Lambda^{k+1}(T))+\mathcal{H}^2_{\od}\Lambda^k(T).
\end{equation}
\end{lemma}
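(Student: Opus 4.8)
The plan is to exploit the fact that the two summands are mapped by $\odelta_k$ (respectively $\od^k$) into mutually $L^2$-orthogonal subspaces, so that the Pythagorean identity upgrades the two separate Poincar\'e estimates already established in \eqref{eq:pimude} and \eqref{eq:hfhde} into a single estimate on the sum. I treat the first inequality; the second follows by the dual argument. First I would record that the sum is direct: elements of $\star\okappa_T\star(\mathcal{P}_0\Lambda^{k-1}(T))$ have linear (degree one) coefficients, whereas those of $\mathcal{H}^2_{\odelta}\Lambda^k(T)$ have genuinely quadratic coefficients, so their intersection is trivial. Hence every $\fmu$ in the sum admits a unique decomposition $\fmu=\fmu_1+\fmu_2$ with $\fmu_1\in\star\okappa_T\star(\mathcal{P}_0\Lambda^{k-1}(T))$ and $\fmu_2\in\mathcal{H}^2_{\odelta}\Lambda^k(T)$.

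The key step is the orthogonality of the images. By Lemma \ref{lem:surjecdde}, $\odelta_k\fmu_1\in\mathcal{P}_0\Lambda^{k-1}$ is a constant $(k-1)$-form, while $\odelta_k\fmu_2\in\okappa_T(\mathcal{P}_0\Lambda^k(T))$ has coefficients that are linear combinations of the $\tilde{x}^{\ixve_j}$, hence of zero mean on $T$ since $\int_T\tilde{x}^j=0$. Integrating the product of a constant coefficient against a zero-mean coefficient therefore vanishes, so $\langle\odelta_k\fmu_1,\odelta_k\fmu_2\rangle_{L^2\Lambda^{k-1}(T)}=0$ and consequently
$$
\|\odelta_k\fmu\|_{L^2\Lambda^{k-1}(T)}^2=\|\odelta_k\fmu_1\|_{L^2\Lambda^{k-1}(T)}^2+\|\odelta_k\fmu_2\|_{L^2\Lambda^{k-1}(T)}^2.
$$

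Finally I would combine this with the triangle inequality $\|\fmu\|_{L^2\Lambda^k(T)}\leqslant\|\fmu_1\|_{L^2\Lambda^k(T)}+\|\fmu_2\|_{L^2\Lambda^k(T)}$ and the two established estimates \eqref{eq:pimude} and \eqref{eq:hfhde}, which bound $\|\fmu_1\|_{L^2\Lambda^k(T)}$ and $\|\fmu_2\|_{L^2\Lambda^k(T)}$ by $C_{k,n}h_T\|\odelta_k\fmu_1\|_{L^2\Lambda^{k-1}(T)}$ and $C_{k,n}h_T\|\odelta_k\fmu_2\|_{L^2\Lambda^{k-1}(T)}$ respectively; a Cauchy--Schwarz step $\|\odelta_k\fmu_1\|+\|\odelta_k\fmu_2\|\leqslant\sqrt{2}\,(\|\odelta_k\fmu_1\|^2+\|\odelta_k\fmu_2\|^2)^{1/2}$ together with the Pythagorean identity above yields the claim.

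The only genuinely delicate point --- and the heart of the argument --- is the $L^2$-orthogonality of $\odelta_k\fmu_1$ and $\odelta_k\fmu_2$; everything else is the triangle inequality and the two lemmas already in hand. This orthogonality is precisely why the direct-sum estimate inherits the same $h_T$-scaling rather than degrading. The second inequality is proved identically, replacing $\odelta_k$ by $\od^k$, the space $\star\okappa_T\star(\mathcal{P}_0\Lambda^{k-1}(T))$ by $\okappa_T(\mathcal{P}_0\Lambda^{k+1}(T))$, and $\mathcal{H}^2_{\odelta}\Lambda^k(T)$ by $\mathcal{H}^2_{\od}\Lambda^k(T)$, and invoking \eqref{eq:pimud}, \eqref{eq:hfhd} and the corresponding image characterizations in Lemma \ref{lem:surjecdde}.
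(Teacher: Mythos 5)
Your proposal is correct and follows essentially the same route as the paper: the paper's proof likewise rests on the observation that $\odelta_k$ maps the two summands into mutually orthogonal subspaces (constants versus zero-mean Koszul images) and then invokes Lemmas \ref{lem:pikappa} and \ref{lem:pih}. You simply spell out the orthogonality and the Pythagorean/triangle-inequality bookkeeping that the paper leaves implicit.
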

\begin{proof}
Note that $\odelta_k(\star\okappa_T\star(\mathcal{P}_0\Lambda^{k-1}(T)))$ and $\odelta_k(\mathcal{H}^2_{\odelta}\Lambda^k(T))$ are orthogonal, and $\od^k(\okappa_T(\mathcal{P}_0\Lambda^{k+1}(T)))$ and  $\od^k(\mathcal{H}^2_{\od}\Lambda^k(T))$ are orthogonal. The lemma follows by Lemmas \ref{lem:pikappa} and \ref{lem:pih}. 
\end{proof}

It is well known the lowest-degree trimmed space is
$$
\mathcal{P}^-_1\Lambda^k(T)=\mathcal{P}_0\Lambda^k(T)+\okappa(\mathcal{P}_0\Lambda^{k+1}(T)),
$$
and denote 
$$
\mathcal{P}^{*,-}_1\Lambda^k(T)=\star(\mathcal{P}^-_1\Lambda^k(T))=\mathcal{P}_0\Lambda^k(T)+\star\okappa\star(\mathcal{P}_0\Lambda^{k-1}(T)).
$$
Here we introduce, for the $H\Lambda^k\cap H^*_0\Lambda^k$ problem, an enriched trimmed space, defined by
\begin{equation}
\pddempdL^k(T):=\mathcal{P}_0\Lambda^k(T)\oplus\okappa_T(\mathcal{P}_0\Lambda^{k+1}(T))\oplus\star\okappa_T\star(\mathcal{P}_0\Lambda^{k-1}(T))\oplus\mathcal{H}^2_{\od}\Lambda^k(T).
\end{equation}

Then 
$$
\od^k(\pddempdL^k(T))=\mathcal{P}^{*,-}_1\Lambda^{k+1}(T),\ \ \mbox{and}\ \ \odelta_k(\pddempdL^k(T))=\mathcal{P}_0\Lambda^{k-1}(T).
$$

\subsubsection{A projective interpolator}

Define the interpolator
$$
\mathbb{I}^{\rm m+\od,k}_{\od\cap\odelta,T}:H\Lambda^k(T)\cap H^*\Lambda^k(T)\to \pddempdL^k(T),
$$
such that, for $\fmu\in H\Lambda^k(T)\cap H^*\Lambda^k(T)$,
\begin{multline}\label{eq:int-1}
\langle\od^k \mathbb{I}^{\rm m+\od,k}_{\od\cap\odelta,T}\fmu,\feta\rangle_{L^2\Lambda^{k+1}(T)}-\langle \mathbb{I}^{\rm m+\od,k}_{\od\cap\odelta,T}\fmu, \odelta_{k+1} \feta\rangle_{L^2\Lambda^k(T)}
\\
=
\langle\od^k \fmu,\feta\rangle_{L^2\Lambda^{k+1}(T)}-\langle \fmu, \odelta_{k+1} \feta\rangle_{L^2\Lambda^k(T)},\ \forall\,\feta\in\mathcal{P}^{*,-}_1\Lambda^{k+1}(T),
\end{multline}
and
\begin{multline}\label{eq:int-2}
\langle\odelta_k \mathbb{I}^{\rm m+\od,k}_{\od\cap\odelta,T}\fmu,\ftau\rangle_{L^2\Lambda^{k-1}(T)}-\langle \mathbb{I}^{\rm m+\od,k}_{\od\cap\odelta,T}\fmu, \od^{k-1} \ftau\rangle_{L^2\Lambda^k(T)}
\\
=
\langle\odelta_k \fmu,\ftau\rangle_{L^2\Lambda^{k-1}(T)}-\langle \fmu, \od^{k-1} \ftau\rangle_{L^2\Lambda^k(T)},\ \forall\,\ftau\in\mathcal{P}^-_1\Lambda^{k-1}(T).
\end{multline}

\begin{lemma}\label{lem:welldintpprojec}
The interpolator $\mathbb{I}^{\rm m+\od,k}_{\od\cap\odelta,T}$ is well defined, and $\mathbb{I}^{\rm m+\od,k}_{\od\cap\odelta,T}\fmu=\fmu$ for $\fmu\in \pddempdL^k(T)$. 
\end{lemma}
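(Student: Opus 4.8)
The plan is to reduce everything to a single \emph{homogeneous uniqueness} statement and exploit that the defining system is square. First I would record the dimension count: the number of conditions is $\dim\mathcal{P}^{*,-}_1\Lambda^{k+1}(T)+\dim\mathcal{P}^-_1\Lambda^{k-1}(T)=\big(\binom{n}{k+1}+\binom{n}{k}\big)+\big(\binom{n}{k-1}+\binom{n}{k}\big)$, which is exactly $\dim\pddempdL^k(T)=\binom{n}{k}+\binom{n}{k+1}+\binom{n}{k-1}+\binom{n}{k}$. Hence it suffices to show that the only $\fzeta\in\pddempdL^k(T)$ satisfying the homogeneous relations
\[ \langle\od^k\fzeta,\feta\rangle_{L^2\Lambda^{k+1}(T)}=\langle\fzeta,\odelta_{k+1}\feta\rangle_{L^2\Lambda^k(T)},\quad \langle\odelta_k\fzeta,\ftau\rangle_{L^2\Lambda^{k-1}(T)}=\langle\fzeta,\od^{k-1}\ftau\rangle_{L^2\Lambda^k(T)} \]
for all $\feta\in\mathcal{P}^{*,-}_1\Lambda^{k+1}(T)$ and all $\ftau\in\mathcal{P}^-_1\Lambda^{k-1}(T)$ is $\fzeta=0$. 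Unisolvence of a square system then yields well-definedness; applying the same uniqueness to $\mathbb{I}^{\rm m+\od,k}_{\od\cap\odelta,T}\fmu-\fmu\in\pddempdL^k(T)$ (which satisfies the homogeneous relations when $\fmu\in\pddempdL^k(T)$) gives the projection property.

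Write $\fzeta=\fzeta_0+\fzeta_1+\fzeta_2+\fzeta_3$ along the direct sum defining $\pddempdL^k(T)$, with $\fzeta_0\in\mathcal{P}_0\Lambda^k(T)$, $\fzeta_1\in\okappa_T(\mathcal{P}_0\Lambda^{k+1}(T))$, $\fzeta_2\in\star\okappa_T\star(\mathcal{P}_0\Lambda^{k-1}(T))$, and $\fzeta_3\in\mathcal{H}^2_{\od}\Lambda^k(T)$. The argument rests on the action table of $\od^k,\odelta_k$ on these four summands: both operators annihilate $\mathcal{P}_0\Lambda^k(T)$; by Lemma~\ref{lem:surjecdde}, $\od^k$ is a bijection $\okappa_T(\mathcal{P}_0\Lambda^{k+1}(T))\to\mathcal{P}_0\Lambda^{k+1}(T)$ and $\mathcal{H}^2_{\od}\Lambda^k(T)\to\star\okappa_T\star(\mathcal{P}_0\Lambda^k(T))$, while $\odelta_k$ is a bijection $\star\okappa_T\star(\mathcal{P}_0\Lambda^{k-1}(T))\to\mathcal{P}_0\Lambda^{k-1}(T)$; moreover $\odelta_k\fzeta_3=0$ (the identity $\odelta_k\tilde\fmu_{\od,T}^{\ixalpha}=0$), and $\od^k\fzeta_2=0$, $\odelta_k\fzeta_1=0$ because the co-Koszul forms are closed and the Koszul forms coclosed (an elementary interior-product computation of the type carried out in the appendix). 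Finally, every summand other than $\mathcal{P}_0\Lambda^k(T)$ has vanishing mean over $T$, by the centring normalisations $\int_T\tilde x^j=0$ and the analogous centring of the quadratics defining $\mathcal{H}^2_{\od}\Lambda^k(T)$.

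I would then peel off the components in order. Testing the second relation against constants $\ftau\in\mathcal{P}_0\Lambda^{k-1}(T)$ (so $\od^{k-1}\ftau=0$) and using $\odelta_k\fzeta\in\mathcal{P}_0\Lambda^{k-1}(T)$ forces $\odelta_k\fzeta=0$; testing instead against $\ftau\in\okappa(\mathcal{P}_0\Lambda^{k})$, for which $\od^{k-1}\ftau$ exhausts $\mathcal{P}_0\Lambda^k(T)$, gives $\langle\fzeta,c\rangle=0$ for every constant $c$, i.e. $\int_T\fzeta=0$, whence $\fzeta_0=0$ since the other three components are mean-free. Next, testing the first relation against $\feta=\od^k\fzeta\in\mathcal{P}^{*,-}_1\Lambda^{k+1}(T)$ yields $\|\od^k\fzeta\|_{L^2\Lambda^{k+1}(T)}^2=\langle\fzeta,\odelta_{k+1}\od^k\fzeta\rangle$; since $\od^k\fzeta\in\mathcal{P}^{*,-}_1\Lambda^{k+1}(T)$ gives $\odelta_{k+1}\od^k\fzeta\in\mathcal{P}_0\Lambda^k(T)$ constant and $\int_T\fzeta=0$, the right-hand side vanishes, so $\od^k\fzeta=0$.

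It then remains to handle $\fzeta=\fzeta_1+\fzeta_2+\fzeta_3$ with $\od^k\fzeta=\odelta_k\fzeta=0$. Using $\od^k\fzeta_2=0$, the identity $0=\od^k\fzeta=\od^k\fzeta_1+\od^k\fzeta_3$ splits into its constant part $\od^k\fzeta_1\in\mathcal{P}_0\Lambda^{k+1}(T)$ and its mean-free part $\od^k\fzeta_3\in\star\okappa_T\star(\mathcal{P}_0\Lambda^k(T))$; taking the mean over $T$ forces $\od^k\fzeta_1=0$ and hence $\od^k\fzeta_3=0$, and the bijectivity of $\od^k$ on $\okappa_T(\mathcal{P}_0\Lambda^{k+1}(T))$ and on $\mathcal{H}^2_{\od}\Lambda^k(T)$ gives $\fzeta_1=\fzeta_3=0$. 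Finally $\odelta_k\fzeta=\odelta_k\fzeta_2=0$ with the bijectivity of $\odelta_k$ on $\star\okappa_T\star(\mathcal{P}_0\Lambda^{k-1}(T))$ gives $\fzeta_2=0$, so $\fzeta=0$. The main thing to get right is the operator table --- in particular the two annihilation facts $\od^k\fzeta_2=0$ and $\odelta_k\fzeta_1=0$, which decouple the four summands --- together with the consistent use of the zero-mean normalisation, both to eliminate the constant component and to make the coupling pairings $\langle\fzeta,\text{const}\rangle$ vanish; the rest is bookkeeping inside the fixed direct sum.
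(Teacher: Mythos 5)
Your argument is correct, but it reaches the conclusion by a different route than the paper. The paper proves existence and uniqueness \emph{constructively}: it splits the test spaces as $\mathcal{P}^{*,-}_1\Lambda^{k+1}(T)=\mathcal{P}_0\Lambda^{k+1}(T)\opp\star\okappa_T\star(\mathcal{P}_0\Lambda^{k}(T))$ and $\mathcal{P}^-_1\Lambda^{k-1}(T)=\mathcal{P}_0\Lambda^{k-1}(T)\opp\okappa_T(\mathcal{P}_0\Lambda^k(T))$ and then solves for the four components $\fmu_{\odelta},\fmu_0,\fmu_{\od},\fmu_{\od+}$ one block at a time, each block being a square invertible subsystem by Lemma~\ref{lem:surjecdde}; no dimension count is ever invoked, because the block-triangular structure already delivers a solution for every right-hand side. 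You instead count dimensions to see the full system is square and then kill the kernel, peeling off the components of a homogeneous solution in essentially the same order in which the paper solves for them. Both routes rest on the same ingredients: Lemma~\ref{lem:surjecdde}, the orthogonal splittings above, the zero-mean normalisations, and the two annihilation identities $\odelta_k\bigl(\okappa_T(\mathcal{P}_0\Lambda^{k+1}(T))\bigr)=0$ and $\od^k\bigl(\star\okappa_T\star(\mathcal{P}_0\Lambda^{k-1}(T))\bigr)=0$ --- these are not recorded explicitly in the paper but are implicit in its assertion that $\odelta_k(\pddempdL^k(T))=\mathcal{P}_0\Lambda^{k-1}(T)$ and $\od^k(\pddempdL^k(T))=\mathcal{P}^{*,-}_1\Lambda^{k+1}(T)$, and your interior-product justification is the right one. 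Your individual steps are sound; in particular $\feta=\od^k\fzeta$ is an admissible test function since $\od^k\fzeta\in\mathcal{P}_0\Lambda^{k+1}(T)\opp\star\okappa_T\star(\mathcal{P}_0\Lambda^k(T))$. What your version buys is a clean, self-contained unisolvence statement (essentially the content of Remark~\ref{rem:nodalf}) without exhibiting the interpolant; what the paper's version buys is the explicit components $\fmu_0,\fmu_{\od},\fmu_{\odelta},\fmu_{\od+}$, which are reused verbatim in the stability estimate immediately following the lemma, so the constructive form there is not merely a stylistic preference.
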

\begin{proof}
Elementarily, 
$$
\mathcal{P}^{*,-}_1\Lambda^{k+1}(T)=\mathcal{P}_0\Lambda^{k+1}(T)\opp\star\okappa_T\star(\mathcal{P}_0\Lambda^{k}(T)),
$$
and 
$$
\mathcal{P}^-_1\Lambda^{k-1}(T)=\mathcal{P}_0\Lambda^{k-1}(T)\opp\okappa_T(\mathcal{P}_0\Lambda^k(T)). 
$$
Given $\feta\in \mathcal{P}^{*,-}_1\Lambda^{k+1}(T)$, decompose it to $\feta=\feta_0+\feta_{\odelta}$, with $\feta_0\in \mathcal{P}_0\Lambda^{k+1}(T)$ and $\feta_{\odelta}\in \star\okappa_T\star(\mathcal{P}_0\Lambda^{k}(T))$, and decompose $\ftau\in \mathcal{P}^-_1\Lambda^{k-1}(T)$ to $\ftau=\ftau_0+\ftau_{\od}$ with $\ftau_0\in\mathcal{P}_0\Lambda^{k-1}(T)$ and $\ftau_{\od}\in \okappa_T(\mathcal{P}_0\Lambda^k(T))$. 

Now, given $\fmu\in H\Lambda^k(T)\cap H^*\Lambda^k(T)$, there exists a unique $\fmu_{\odelta}\in \star\okappa_T\star(\mathcal{P}_0\Lambda^{k-1}(T))$, such that 
$$
\langle\odelta_k\fmu_{\odelta},\ftau_0\rangle_{L^2\Lambda^{k-1}(T)}=\langle\odelta_k\fmu,\ftau_0\rangle_{L^2\Lambda^{k-1}(T)},\ \forall\,\ftau_0\in\mathcal{P}_0\Lambda^{k-1}(T),
$$
and there exists a unique $\fmu_0\in\mathcal{P}_0\Lambda^k(T)$, such that 
$$
-\langle\fmu_0,\od^{k-1}\ftau_{\od}\rangle_{L^2\Lambda^k(T)}=\langle\odelta_k\fmu,\ftau_{\od}\rangle_{L^2\Lambda^{k-1}(T)}-\langle\fmu,\od^{k-1}\ftau_{\od}\rangle_{L^2\Lambda^k(T)},\ \forall\,\ftau_{\od}\in \okappa_T(\mathcal{P}_0\Lambda^k(T)).
$$
Then, there exists a unique $\fmu_{\od}\in \okappa_T(\mathcal{P}_0\Lambda^{k+1}(T))$, such that 
\begin{equation}\label{eq:dkmueta0}
\langle \od^k\fmu_{\od},\feta_0\rangle_{L^2\Lambda^{k+1}(T)}=\langle\od^k\fmu,\feta_0\rangle_{L^2\Lambda^{k+1}(T)},\ \forall\,\feta_0\in\mathcal{P}_0\Lambda^{k+1}(T),
\end{equation}
and there exists a unique $\fmu_{\od+}\in\mathcal{H}^2_{\od}\Lambda^k(T)$, such that 
\begin{multline}\label{eq:dkmuetaod}
\langle\od^k\fmu_{\od+},\feta_{\odelta}\rangle_{L^2\Lambda^{k+1}(T)}-\langle\fmu_0,\odelta_{k+1}\feta_{\odelta}\rangle_{L^2\Lambda^k(T)}
\\
=\langle\od^k\fmu,\feta_{\odelta}\rangle_{L^2\Lambda^{k-1}(T)}-\langle\fmu,\odelta_{k+1}\feta_{\odelta}\rangle_{L^2\Lambda^k(T)},\ \forall\,\feta_{\odelta}\in\star\okappa_T\star(\mathcal{P}_0\Lambda^k(T)). 
\end{multline}
Now set $\fmu_K:=\fmu_0+\fmu_{\od}+\fmu_{\odelta}+\fmu_{\od+}$, and $\fmu_K$ satisfies all requirements \eqref{eq:int-1} and \eqref{eq:int-2} of $\mathbb{I}^{\rm m+\od,k}_{\od\cap\odelta,T}\fmu$; namely, $\mathbb{I}^{\rm m+\od,k}_{\od\cap\odelta,T}\fmu=\fmu_0+\fmu_{\od}+\fmu_{\odelta}+\fmu_{\od+}$, uniquely determined. Evidently, if $\fmu\in \pddempdL^k(T)$, $\fmu=\mathbb{I}^{\rm m+\od,k}_{\od\cap\odelta,T}\fmu$. The proof is completed. 
\end{proof}

\begin{remark}\label{rem:nodalf}
Denote a set of quantities 
\begin{equation}\label{eq:detcond}
\left\{\langle\od^k \fmu,\feta\rangle_{L^2\Lambda^{k+1}(T)}-\langle \fmu, \odelta_{k+1} \feta\rangle_{L^2\Lambda^k(T)},\ \ \langle\odelta_k \fmu,\ftau\rangle_{L^2\Lambda^{k-1}(T)}-\langle \fmu, \od^{k-1} \ftau\rangle_{L^2\Lambda^k(T)}\right\}.
\end{equation}
Then, according to the proof of Lemma \ref{lem:welldintpprojec}, 
\begin{itemize}
\item given $\fmu\in \pddempdL^k(T)$, $\fmu=0$ if and only if all quantities in \eqref{eq:detcond} vanish for any $(\feta,\ftau)\in \mathcal{P}^{*,-}_1\Lambda^{k+1}(T)\times\mathcal{P}^-_1\Lambda^{k-1}(T)$; 
\item given $(\feta,\ftau)\in \mathcal{P}^{*,-}_1\Lambda^{k+1}(T)\times\mathcal{P}^-_1\Lambda^{k-1}(T)$, $(\feta,\ftau)=(0,0)$, if and only if all quantities in \eqref{eq:detcond} vanish for any $\fmu\in \pddempdL^k(T)$.
\end{itemize}
This way, $\fmu\in\pddempdL^k(T)$ is unisolvent by \eqref{eq:detcond} with $(\feta,\ftau)\in \mathcal{P}^{*,-}_1\Lambda^{k+1}(T)\times\mathcal{P}^-_1\Lambda^{k-1}(T)$, and  $(\feta,\ftau)\in \mathcal{P}^{*,-}_1\Lambda^{k+1}(T)\times\mathcal{P}^-_1\Lambda^{k-1}(T)$ is unisolvent by \eqref{eq:detcond} with $\fmu\in \pddempdL^k(T)$.
\end{remark}

\begin{lemma}
There exists a constant $C_{k,n}$, depending on the regularity of the simplex $T$, such that,
\begin{multline*}
\|\mathbb{I}^{\rm m+\od,k}_{\od\cap\odelta,T}\fmu\|_{L^2\Lambda^k(T)} + \|\od^k\mathbb{I}^{\rm m+\od,k}_{\od\cap\odelta,T}\fmu\|_{L^2\Lambda^{k+1}(T)} + 
\|\odelta_k\mathbb{I}^{\rm m+\od,k}_{\od\cap\odelta,T}\fmu\|_{L^2\Lambda^{k-1}(T)}
\\
\leqslant C_{k,n}  (\|\fmu\|_{L^2\Lambda^k(T)} + \|\od^k\fmu\|_{L^2\Lambda^{k+1}(T)} + \|\odelta_k\fmu\|_{L^2\Lambda^{k-1}(T)}).
\end{multline*}
\end{lemma}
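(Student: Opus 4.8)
The plan is to exploit the explicit block decomposition of the interpolation obtained in the proof of Lemma~\ref{lem:welldintpprojec}. Writing $\mathbb{I}:=\mathbb{I}^{\rm m+\od,k}_{\od\cap\odelta,T}$, we have $\mathbb{I}\fmu=\fmu_0+\fmu_{\od}+\fmu_{\odelta}+\fmu_{\od+}$ with $\fmu_0\in\mathcal{P}_0\Lambda^k(T)$, $\fmu_{\od}\in\okappa_T(\mathcal{P}_0\Lambda^{k+1}(T))$, $\fmu_{\odelta}\in\star\okappa_T\star(\mathcal{P}_0\Lambda^{k-1}(T))$ and $\fmu_{\od+}\in\mathcal{H}^2_{\od}\Lambda^k(T)$, each characterised by its defining relation from that proof. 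I would bound the $L^2$-norm of each block together with its images under $\od^k$ and $\odelta_k$ separately, and then collect the estimates by the triangle inequality. Here I repeatedly use Lemma~\ref{lem:surjecdde} to locate the ranges of $\od^k,\odelta_k$ on each block, the facts $\od^k\fmu_0=\odelta_k\fmu_0=0$ and $\odelta_k\fmu_{\od+}=0$ (the latter since $\odelta_k\tilde\fmu_{\od,T}^{\ixalpha}=0$), and Lemmas~\ref{lem:pikappa} and~\ref{lem:pih}, which supply the Poincaré-type inequalities on the relevant polynomial spaces.

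The three blocks $\fmu_{\odelta},\fmu_{\od},\fmu_0$ are straightforward. For $\fmu_{\odelta}$, since $\odelta_k\fmu_{\odelta}\in\mathcal{P}_0\Lambda^{k-1}(T)$ is an admissible test form, testing its defining relation against $\ftau_0=\odelta_k\fmu_{\odelta}$ gives $\|\odelta_k\fmu_{\odelta}\|\leqslant\|\odelta_k\fmu\|$, whence $\|\fmu_{\odelta}\|\leqslant C h_T\|\odelta_k\fmu\|$ by \eqref{eq:pimude}. Symmetrically, testing \eqref{eq:dkmueta0} with $\feta_0=\od^k\fmu_{\od}$ gives $\|\od^k\fmu_{\od}\|\leqslant\|\od^k\fmu\|$ and then $\|\fmu_{\od}\|\leqslant C h_T\|\od^k\fmu\|$ by \eqref{eq:pimud}. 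For $\fmu_0$, choosing $\ftau_{\od}\in\okappa_T(\mathcal{P}_0\Lambda^k(T))$ with $\od^{k-1}\ftau_{\od}=\fmu_0$ (possible since $\od^{k-1}$ is a bijection there) in the defining relation of $\fmu_0$ yields $\|\fmu_0\|^2=\langle\fmu,\fmu_0\rangle-\langle\odelta_k\fmu,\ftau_{\od}\rangle$, and bounding $\|\ftau_{\od}\|\leqslant C h_T\|\fmu_0\|$ by the $\od^{k-1}$-version of \eqref{eq:pimud} gives $\|\fmu_0\|\leqslant\|\fmu\|+C h_T\|\odelta_k\fmu\|$. Since $\odelta_k\mathbb{I}\fmu=\odelta_k\fmu_{\odelta}+\odelta_k\fmu_{\od}$ and $\od^k\mathbb{I}\fmu=\od^k\fmu_{\od}+\od^k\fmu_{\odelta}+\od^k\fmu_{\od+}$, the remaining differential contributions of $\fmu_{\od},\fmu_{\odelta}$ are handled by inverse inequalities on $\pddempdL^k(T)$: in fact $\od^k\fmu_{\odelta}=0$ and $\odelta_k\fmu_{\od}=0$, but even the crude bounds $\|\od^k\fmu_{\odelta}\|\leqslant C h_T^{-1}\|\fmu_{\odelta}\|\leqslant C\|\odelta_k\fmu\|$ and $\|\odelta_k\fmu_{\od}\|\leqslant C\|\od^k\fmu\|$ suffice.

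The only genuine difficulty is the enrichment block $\fmu_{\od+}$ and its image $\od^k\fmu_{\od+}\in\star\okappa_T\star(\mathcal{P}_0\Lambda^k(T))$, whose defining relation \eqref{eq:dkmuetaod} rearranges to
$$
\langle\od^k\fmu_{\od+},\feta_{\odelta}\rangle=\langle\od^k\fmu,\feta_{\odelta}\rangle-\langle\fmu-\fmu_0,\odelta_{k+1}\feta_{\odelta}\rangle,\qquad\feta_{\odelta}\in\star\okappa_T\star(\mathcal{P}_0\Lambda^k(T)).
$$
A direct bound of the last term by $\|\fmu-\fmu_0\|\,\|\odelta_{k+1}\feta_{\odelta}\|$ with an inverse inequality would produce a spurious factor $h_T^{-1}$, so this route fails. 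The key observation is that, because $\od^k\fmu_0=0$, the integration-by-parts identity $\langle\od^k\fomega,\feta\rangle-\langle\fomega,\odelta_{k+1}\feta\rangle=\int_{\partial T}\mathrm{tr}\,\fomega\wedge\mathrm{tr}\star\feta$ collapses the right-hand side into a pure boundary functional,
$$
\langle\od^k\fmu_{\od+},\feta_{\odelta}\rangle=\int_{\partial T}\mathrm{tr}(\fmu-\fmu_0)\wedge\mathrm{tr}\star\feta_{\odelta}.
$$
Testing with $\feta_{\odelta}=\od^k\fmu_{\od+}$ and estimating the pairing by $H^{-1/2}$--$H^{1/2}$ duality on $\partial T$, I would use the $H(\od)$-trace theorem to bound $\|\mathrm{tr}(\fmu-\fmu_0)\|_{H^{-1/2}(\partial T)}\leqslant C(\|\fmu-\fmu_0\|+\|\od^k\fmu\|)$ (note $\od^k(\fmu-\fmu_0)=\od^k\fmu$) and the equivalence of norms on the finite-dimensional polynomial space to bound $\|\mathrm{tr}\star\od^k\fmu_{\od+}\|_{H^{1/2}(\partial T)}$ by $\|\od^k\fmu_{\od+}\|_{L^2(T)}$. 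Cancelling one power of $\|\od^k\fmu_{\od+}\|$ then gives $\|\od^k\fmu_{\od+}\|\leqslant C(\|\fmu\|+\|\od^k\fmu\|+\|\odelta_k\fmu\|)$, and hence $\|\fmu_{\od+}\|\leqslant C h_T\|\od^k\fmu_{\od+}\|$ by \eqref{eq:hfhd}. To keep the constant dependent only on the shape-regularity of $T$, I would carry out this trace estimate on the reference simplex and transport it by the usual affine scaling. This trace step is the crux of the argument, precisely because it is what prevents the $h_T^{-1}$ blow-up.

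Finally I would assemble: $\|\mathbb{I}\fmu\|\leqslant\|\fmu_0\|+\|\fmu_{\od}\|+\|\fmu_{\odelta}\|+\|\fmu_{\od+}\|$ and the analogous triangle-inequality sums for $\od^k\mathbb{I}\fmu$ and $\odelta_k\mathbb{I}\fmu$, substitute the block bounds above, and absorb the benign factors $h_T$ into $C_{k,n}$, which yields the stated stability estimate.
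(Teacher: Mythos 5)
Your block decomposition and your estimates for $\fmu_0$, $\fmu_{\od}$ and $\fmu_{\odelta}$ coincide with the paper's argument, and you correctly identify the enrichment block $\fmu_{\od+}$ as the only delicate point. However, your resolution of that point fails. The boundary-pairing estimate you rely on, namely
$$
\Big|\int_{\partial T}\mathrm{tr}\,(\fmu-\fmu_0)\wedge\mathrm{tr}\star\feta_{\odelta}\Big|\ \leqslant\ C\big(\|\fmu-\fmu_0\|_{L^2\Lambda^k(T)}+\|\od^k\fmu\|_{L^2\Lambda^{k+1}(T)}\big)\,\|\feta_{\odelta}\|_{L^2\Lambda^{k+1}(T)}
$$
with $C$ depending only on shape regularity, is false. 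Take $\fomega$ a nonzero \emph{constant} $k$-form and $0\neq\feta_{\odelta}\in\star\okappa_T\star(\mathcal{P}_0\Lambda^{k}(T))$: since $\od^k\fomega=0$, the boundary integral equals $-\langle\fomega,\odelta_{k+1}\feta_{\odelta}\rangle_{L^2\Lambda^k(T)}$, and $\odelta_{k+1}\feta_{\odelta}$ is a nonzero constant form with $\|\feta_{\odelta}\|\leqslant C_{k,n}h_T\|\odelta_{k+1}\feta_{\odelta}\|$ (the level-$(k+1)$ analogue of \eqref{eq:pimude}); choosing $\fomega=\odelta_{k+1}\feta_{\odelta}$ makes the left-hand side of order $h_T^{-1}\|\fomega\|_{L^2\Lambda^k(T)}\|\feta_{\odelta}\|_{L^2\Lambda^{k+1}(T)}$ while the right-hand side is $C\|\fomega\|\,\|\feta_{\odelta}\|$. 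The factor $h_T^{-1}$ you were trying to avoid has not disappeared; it is hidden in the affine scaling of the fractional trace norms (the product of your two trace constants necessarily grows like $h_T^{-1}$), so carrying the estimate out on the reference simplex does not rescue it.

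The repair is much simpler than a trace argument, and it is what the paper does. In the term $\langle\fmu-\fmu_0,\odelta_{k+1}\feta_{\odelta}\rangle_{L^2\Lambda^k(T)}$ the factor $\odelta_{k+1}\feta_{\odelta}$ is a \emph{constant} $k$-form, so $\fmu$ may be replaced by its mean $\mathbf{P}^k_0\fmu$; combining the bound $\|\mathbf{P}^k_0\fmu-\fmu_0\|_{L^2\Lambda^k(T)}\leqslant C_{k,n}h_T\|\odelta_k\fmu\|_{L^2\Lambda^{k-1}(T)}$, which you have already established in your $\fmu_0$ step, with the inverse inequality $\|\odelta_{k+1}\feta_{\odelta}\|\leqslant C_{k,n}h_T^{-1}\|\feta_{\odelta}\|$ gives
$$
|\langle\fmu-\fmu_0,\odelta_{k+1}\feta_{\odelta}\rangle|=|\langle\mathbf{P}^k_0\fmu-\fmu_0,\odelta_{k+1}\feta_{\odelta}\rangle|\leqslant C_{k,n}\|\odelta_k\fmu\|_{L^2\Lambda^{k-1}(T)}\,\|\feta_{\odelta}\|_{L^2\Lambda^{k+1}(T)},
$$
with no uncompensated $h_T^{-1}$. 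Testing \eqref{eq:dkmuetaod} with $\feta_{\odelta}=\od^k\fmu_{\od+}$ then yields $\|\od^k\fmu_{\od+}\|\leqslant\|\od^k\fmu\|+C_{k,n}\|\odelta_k\fmu\|$, and the remainder of your assembly goes through. This is exactly the mechanism in the paper's proof, which phrases it as $\|\mathbf{P}^{k+1}_{\mathcal{P}^{*,-}_1\Lambda^{k+1}(T)}\od^k\fmu-\od^k\mathbb{I}^{\rm m+\od,k}_{\od\cap\odelta,T}\fmu\|\leqslant C_{k,n}h_T^{-1}\|\mathbf{P}^k_0\fmu-\fmu_0\|\leqslant C_{k,n}\|\odelta_k\fmu\|$.
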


\begin{proof}
Given $\fmu\in H\Lambda^k(T)\cap H^*\Lambda^k(T)$, for the interpolation, we use $\fmu_0$, $\fmu_{\od}$, $\fmu_{\odelta}$ and $\fmu_{\od+}$ as in the proof of Lemma \ref{lem:welldintpprojec}, and $\mathbb{I}^{\rm m+\od,k}_{\od\cap\odelta,T}\fmu=\fmu_0+\fmu_{\od}+\fmu_{\odelta}+\fmu_{\od+}$. Then 
$$
\odelta_k \mathbb{I}^{\rm m+\od,k}_{\od\cap\odelta,T}\fmu=\odelta_k\fmu_{\odelta}=\mathbf{P}^k_0 \odelta_k\fmu. 
$$
Further,
$$
\langle \fmu-\fmu_0,\od^{k-1}\ftau_{\od}\rangle_{L^2\Lambda^k(T)}=\langle\odelta_k\fmu,\ftau_{\od}\rangle_{L^2\Lambda^{k-1}(T)},\ \forall\,\ftau_{\od}\in\okappa_T(\mathcal{P}_0\Lambda^k).
$$
Then, by Lemmas \ref{lem:surjecdde} and \ref{lem:pikappa}, we have
$$
\|\mathbf{P}^k_0\fmu-\fmu_0\|_{L^2\Lambda^k(T)}\leqslant C_{k,n}h_T\|\odelta_k\fmu\|_{L^2\Lambda^{k-1}(T)}.
$$
By \eqref{eq:dkmueta0} and \eqref{eq:dkmuetaod}, we have 
$$
\langle \od^k\mathbb{I}^{\rm m+\od,k}_{\od\cap\odelta,T}\fmu-\od^k\fmu,\feta_0\rangle_{L^2\Lambda^{k+1}(T)}=0,\ \forall\,\feta_0\in\mathcal{P}_0\Lambda^{k+1}(T),
$$
and
$$
\langle\od^k\mathbb{I}^{\rm m+\od,k}_{\od\cap\odelta,T}\fmu-\od^k\fmu,\feta_{\odelta}\rangle_{L^2\Lambda^{k+1}(T)}=\langle\fmu_0-\fmu,\odelta_{k+1}\feta_{\odelta}\rangle_{L^2\Lambda^k(T)},\ \forall\,\feta_{\odelta}\in\star\okappa_T\star(\mathcal{P}_0\Lambda^k(T)). 
$$
Therefore, by Lemma \ref{lem:pikappa},
$$
\|\mathbf{P}^{k+1}_{\mathcal{P}^{*,-}_1\Lambda^{k+1}(T)}\od^k\fmu-\od^k\mathbb{I}^{\rm m+\od,k}_{\od\cap\odelta,T}\fmu\|_{L^2\Lambda^{k+1}(T)}\leqslant C_{k,n}h_T^{-1}\|\mathbf{P}^k_0\fmu-\fmu_0\|_{L^2\Lambda^k(T)}\leqslant C_{k,n}\|\odelta_k\fmu\|_{L^2\Lambda^{k-1}(T)}.
$$
Summing all above leads to the assertion and completed the proof. 
\end{proof}

\begin{lemma}\label{lem:approxcell}
There exists a constant $C_{k,n}$, depending on the regularity of the simplex $T$, such that,
\begin{multline}
\|\fomega-\mathbb{I}^{\rm m+\od,k}_{\od\cap\odelta,T}\fomega\|_{L^2\Lambda^k(T)} + \|\od^k\fomega-\od^k\mathbb{I}^{\rm m+\od,k}_{\od\cap\odelta,T}\fomega\|_{L^2\Lambda^{k+1}(T)} + 
\|\odelta_k\fomega-\odelta_k\mathbb{I}^{\rm m+\od,k}_{\od\cap\odelta,T}\fomega\|_{L^2\Lambda^{k-1}(T)}
\\
\leqslant C_{k,n} \inf_{\fmu\in\pddempdL^k(T)}\left[\|\fomega-\fmu\|_{L^2\Lambda^k(T)} + \|\od^k(\fomega-\fmu)\|_{L^2\Lambda^{k+1}(T)} + \|\odelta_k(\fomega-\fmu)\|_{L^2\Lambda^{k-1}(T)}\right].
\end{multline}
\end{lemma}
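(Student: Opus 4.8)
The plan is to deduce this local approximation bound from two facts already established: the interpolator $\mathbb{I}^{\rm m+\od,k}_{\od\cap\odelta,T}$ is a projection onto $\pddempdL^k(T)$ (Lemma \ref{lem:welldintpprojec}), and it is stable in the graph norm (the unlabelled lemma immediately preceding this one). This is exactly the situation in which a bounded linear projection is automatically quasi-optimal, so no Bramble--Hilbert argument or explicit $h_T$-scaling is needed; the estimate is stated in terms of best approximation rather than Sobolev seminorms.

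First I would record that the interpolator is linear: its defining relations \eqref{eq:int-1}--\eqref{eq:int-2} are linear in $\fmu$, and the construction in Lemma \ref{lem:welldintpprojec} produces $\mathbb{I}^{\rm m+\od,k}_{\od\cap\odelta,T}\fmu$ by solving a chain of uniquely solvable linear problems, so $\fmu\mapsto\mathbb{I}^{\rm m+\od,k}_{\od\cap\odelta,T}\fmu$ is linear. Together with the idempotency $\mathbb{I}^{\rm m+\od,k}_{\od\cap\odelta,T}\fmu=\fmu$ for $\fmu\in\pddempdL^k(T)$, this yields the key identity: for every $\fmu\in\pddempdL^k(T)$,
\begin{equation*}
\fomega-\mathbb{I}^{\rm m+\od,k}_{\od\cap\odelta,T}\fomega=(\fomega-\fmu)-\mathbb{I}^{\rm m+\od,k}_{\od\cap\odelta,T}(\fomega-\fmu),
\end{equation*}
since the interpolation of $\fmu$ reproduces $\fmu$.

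Next I would apply $\od^k$ and $\odelta_k$ to this identity, using their linearity, and bound each of the three contributions on the left by the triangle inequality. The piece $\mathbb{I}^{\rm m+\od,k}_{\od\cap\odelta,T}(\fomega-\fmu)$, together with its $\od^k$- and $\odelta_k$-images, is controlled via the stability estimate of the preceding lemma by the full graph norm $\|\fomega-\fmu\|_{L^2\Lambda^k(T)}+\|\od^k(\fomega-\fmu)\|_{L^2\Lambda^{k+1}(T)}+\|\odelta_k(\fomega-\fmu)\|_{L^2\Lambda^{k-1}(T)}$, while the remaining $\fomega-\fmu$ contributions already have exactly this form. Summing the three estimates gives the bound with constant $1+C_{k,n}$, uniformly in $\fmu$, and taking the infimum over $\fmu\in\pddempdL^k(T)$ produces the claimed inequality.

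There is no substantial obstacle here: the real content is front-loaded into Lemma \ref{lem:welldintpprojec} and the stability lemma. The only points needing a word of care are confirming linearity of the interpolator, so that the difference identity is legitimate, and keeping track that the stability estimate simultaneously controls all three pieces (the $L^2$-norm, the $\od^k$-image, and the $\odelta_k$-image) rather than one at a time.
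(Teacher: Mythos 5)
Your proposal is correct and is exactly the argument the paper intends: its proof reads ``follows immediately by the projection and stability,'' which is precisely the standard quasi-optimality deduction you spell out via the identity $\fomega-\mathbb{I}^{\rm m+\od,k}_{\od\cap\odelta,T}\fomega=(\fomega-\fmu)-\mathbb{I}^{\rm m+\od,k}_{\od\cap\odelta,T}(\fomega-\fmu)$, the triangle inequality, the stability bound, and an infimum over $\fmu\in\pddempdL^k(T)$. Your additional care about linearity and about the stability estimate controlling all three graph-norm pieces simultaneously is appropriate but does not change the route.
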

\begin{proof}
The proof follows immediately by the projection and stability. 
\end{proof}

\begin{remark}
It can be proved that 
\begin{multline*}
\inf_{\fmu\in\pddempdL^k(T)}\|\fomega-\fmu\|_{L^2\Lambda^k(T)} + \|\od^k(\fomega-\fmu)\|_{L^2\Lambda^{k+1}(T)} + \|\odelta_k(\fomega-\fmu)\|_{L^2\Lambda^{k-1}(T)}
\\
\leqslant \inf_{\fmu\in\mathcal{P}_0\Lambda^k(T)}\|\fomega-\fmu\|_{L^2\Lambda^k(T)} + \inf_{\feta\in\mathcal{P}^{*,-}_1\Lambda^{k+1}(T)}\|\od^k\fomega-\feta\|_{L^2\Lambda^{k+1}(T)} + \inf_{\ftau\in\mathcal{P}_0\Lambda^{k-1}(T)}\|\odelta_k\fomega-\ftau\|_{L^2\Lambda^{k-1}(T)}
\\
+Ch_T(\|\od^k\fomega\|_{L^2\Lambda^{k+1}(T)}+\|\odelta_k\fomega\|_{L^2\Lambda^{k-1}(T)}).
\end{multline*}
\end{remark}

\subsection{Finite element space and global approximation}

For $\Xi$ a subdomain of $\Omega$, we denote by $E_\Xi^\Omega$ the extension from $L^1_{\rm loc}(\Xi)$ to $L^1_{\rm loc}(\Omega)$, the spaces of locally integrable functions, respectively. Namely, 
$$
E_\Xi^\Omega: L^1_{\rm loc}(\Xi)\to L^1_{\rm loc}(\Omega),\ \ 
E_\Xi^\Omega v=\left\{\begin{array}{ll}
v,&\mbox{on}\,\Xi,
\\
0,&\mbox{else},
\end{array}\right.
\ \mbox{for}\,v\in L^1_{\rm loc}(\Xi).
$$
We use the same notation $L^1_{\rm loc}$ for both scalar and non-scalar locally integrable functions, and, here and in the sequel, use the same notation $E_\Xi^\Omega$ for both scalar and non-scalar functions. 

Let $\mathcal{G}_\Omega=\left\{\mathcal{G}_h\right\}$ be a set of shape regular subdivisions of $\Omega$ by simplexes. For a subdivision $\mathcal{G}_h$, define formally the product of a set of function spaces $\left\{\Upsilon(T)\right\}_{T\in\mathcal{G}_h}$ defined cell by cell such that $E_T^\Omega\Upsilon(T)$ for all $T\in\mathcal{G}_h$ are compatible, 
$$
\prodg\Upsilon(T):=\sumg E_T^\Omega\Upsilon(T),
$$
and the summation is direct. $\displaystyle\prodg\Upsilon(T)$ defined this way is actually the tensor product of all $\Upsilon(T)$.

Denote
\begin{itemize}
\item $\displaystyle\mathcal{P}^-_1\Lambda^k(\mathcal{G}_h):=\prod_{T\in\mathcal{G}_h}\mathcal{P}^-_1\Lambda^k(T)$;

\item $\displaystyle\mathcal{P}^{*,-}_1\Lambda^k(\mathcal{G}_h):=\prod_{T\in\mathcal{G}_h}\mathcal{P}^{*,-}_1\Lambda^k(T)$;

\item $\displaystyle\pddempdL^k(\mathcal{G}_h):=\prod_{T\in\mathcal{G}_h}\pddempdL^k(T)$.
\end{itemize}
Denote the conforming space by Whitney forms by
$$
\fW^*_{h0}\Lambda^{k+1}=\mathcal{P}^{*,-}_1\Lambda^{k+1}(\mathcal{G}_h)\cap H^*_0\Lambda^{k+1},\ \mbox{and}\ \ 
\fW_h\Lambda^{k-1}=\mathcal{P}^-_1\Lambda^{k-1}(\mathcal{G}_h)\cap H\Lambda^{k-1}.
$$
On $\mathcal{G}_h$, define the finite element space:
\begin{multline}\label{eq:fems}
\fV_{\od\cap\odelta}^{\rm m, +\od}\Lambda^k:=\Big\{\fmu_h\in\pddempdL^k(\mathcal{G}_h):
\langle\od^k_h\fmu_h,\feta_h\rangle_{L^2\Lambda^{k+1}}-\langle\fmu_h,\odelta_{k+1}\feta_h\rangle_{L^2\Lambda^k}=0,\ \forall\,\feta_h\in\fW^*_{h0}\Lambda^{k+1},
\\
\mbox{and}\ \ \langle\odelta_{k,h}\fmu_h,\ftau_h\rangle_{L^2\Lambda^{k-1}}-\langle\fmu_h,\od^{k-1}\ftau_h\rangle_{L^2\Lambda^k}=0,\ \forall\,\ftau_h\in\fW_h\Lambda^{k-1}
\Big\}.
\end{multline}
Here and in the sequel we use the subscript $\cdot_h$ to denote the piecewise operation on $\mathcal{G}_h$.

Define the interpolator 
$$
\mathbb{I}^{\rm m+\od,k}_{\od\cap\odelta,h}:\prod_{T\in\mathcal{G}_h}H\Lambda^k(T)\cap H^*\Lambda^k(T)\to \prod_{T\in\mathcal{G}_h} \pddempdL^k(T),
$$
such that 
$$
(\mathbb{I}^{\rm m+\od,k}_{\od\cap\odelta,h}\fmu)|_T=\mathbb{I}^{\rm m+\od,k}_{\od\cap\odelta,T}(\fmu|_T),\ \forall\,T\in\mathcal{G}_h. 
$$

\begin{lemma}
If $\fmu\in H\Lambda^k\cap H^*_0\Lambda^k$, then \ \ $\mathbb{I}^{\rm m+\od,k}_{\od\cap\odelta,h}\fmu\in \fV_{\od\cap\odelta}^{\rm m, +\od}\Lambda^k$.
\end{lemma}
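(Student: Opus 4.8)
The plan is to set $\fmu_h:=\mathbb{I}^{\rm m+\od,k}_{\od\cap\odelta,h}\fmu$ and verify directly that it satisfies the two defining orthogonality conditions of $\fV_{\od\cap\odelta}^{\rm m, +\od}\Lambda^k$ in \eqref{eq:fems}. Since $\fmu\in H\Lambda^k\cap H^*_0\Lambda^k$ restricts on each $T$ to an element of $H\Lambda^k(T)\cap H^*\Lambda^k(T)$, Lemma~\ref{lem:welldintpprojec} guarantees that the cell interpolant is well defined and that $\fmu_h\in\pddempdL^k(\mathcal{G}_h)$ holds automatically. For each of the two conditions I would follow the same route: reduce the global pairing to a sum of cell-local identities supplied by \eqref{eq:int-1}--\eqref{eq:int-2}, replace the interpolant $\fmu_h$ by $\fmu$ cell by cell, and then recognize the resulting global sum as an integration-by-parts pairing that vanishes because of a homogeneous boundary condition carried by one of the forms.

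For the first condition I would fix $\feta_h\in\fW^*_{h0}\Lambda^{k+1}$. Because $\feta_h|_T\in\mathcal{P}^{*,-}_1\Lambda^{k+1}(T)$, identity \eqref{eq:int-1} applies on each $T$ with $\feta=\feta_h|_T$, yielding
\[
\langle\od^k\fmu_h,\feta_h\rangle_{L^2\Lambda^{k+1}(T)}-\langle\fmu_h,\odelta_{k+1}\feta_h\rangle_{L^2\Lambda^k(T)}
=\langle\od^k\fmu,\feta_h\rangle_{L^2\Lambda^{k+1}(T)}-\langle\fmu,\odelta_{k+1}\feta_h\rangle_{L^2\Lambda^k(T)}.
\]
Summing over $T\in\mathcal{G}_h$, and using that the piecewise $\od^k_h\fmu$ coincides with the global $\od^k\fmu$ (as $\fmu\in H\Lambda^k$) while the piecewise $\odelta_{k+1}\feta_h$ coincides with the global codifferential (as $\feta_h\in H^*$), the left-hand side of the first condition becomes $\langle\od^k\fmu,\feta_h\rangle_{L^2\Lambda^{k+1}}-\langle\fmu,\odelta_{k+1}\feta_h\rangle_{L^2\Lambda^k}$. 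This is precisely the adjointness pairing between $\od^k$ on $H\Lambda^k$ and $\odelta_{k+1}$ on $H^*_0\Lambda^{k+1}$; as $\feta_h\in H^*_0\Lambda^{k+1}$ the boundary term drops and the expression is zero.

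The second condition is symmetric. Fixing $\ftau_h\in\fW_h\Lambda^{k-1}$ and using $\ftau_h|_T\in\mathcal{P}^-_1\Lambda^{k-1}(T)$, identity \eqref{eq:int-2} gives the analogous cell-local equalities; after summation and the same replacement of piecewise operators by their global counterparts (now invoking $\fmu\in H^*\Lambda^k$ and $\ftau_h\in H\Lambda^{k-1}$), the left-hand side of the second condition reduces to $\langle\odelta_k\fmu,\ftau_h\rangle_{L^2\Lambda^{k-1}}-\langle\fmu,\od^{k-1}\ftau_h\rangle_{L^2\Lambda^k}$. Here the homogeneous boundary condition $\fmu\in H^*_0\Lambda^k$ makes the adjointness pairing between $\odelta_k$ on $H^*_0\Lambda^k$ and $\od^{k-1}$ on $H\Lambda^{k-1}$ vanish, so this difference is again zero.

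The step I expect to carry the real content is the passage from the cell-local identities to the two global integration-by-parts identities: the conditions \eqref{eq:int-1}--\eqref{eq:int-2} are engineered so that the local quantities telescope exactly into the global pairings, and the vanishing of each relies crucially on exactly one homogeneous boundary condition ($\feta_h\in H^*_0$ in the first, $\fmu\in H^*_0$ in the second). The only genuinely routine point to confirm is the compatibility of the piecewise and global $\od$/$\odelta$ operators on the $H$- and $H^*$-regular forms at hand, which is what lets the cellwise summation reproduce the global $L^2$ inner products in \eqref{eq:fems}.
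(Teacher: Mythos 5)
Your proposal is correct and follows essentially the same route as the paper: apply the cell-local identities \eqref{eq:int-1}--\eqref{eq:int-2} with $\feta_h|_T$ and $\ftau_h|_T$ as test forms, sum over cells to recover the global pairings, and observe that each vanishes by the adjointness of $\od$ and $\odelta$ together with the relevant homogeneous boundary condition ($\feta_h\in H^*_0\Lambda^{k+1}$ in the first case, $\fmu\in H^*_0\Lambda^k$ in the second). The only difference is cosmetic: the paper states the vanishing of the two global pairings first and then performs the cellwise replacement, whereas you do it in the opposite order.
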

\begin{proof}
Provided $\fmu\in H\Lambda^k\cap H^*_0\Lambda^k$, it holds that 
$$
\left\{
\begin{array}{ll}
\langle\od^k\fmu,\feta_h\rangle_{L^2\Lambda^{k+1}}-\langle\fmu,\odelta_{k+1}\feta_h\rangle_{L^2\Lambda^k}=0,&\forall\,\feta_h\in\fW^*_{h0}\Lambda^{k+1}
\\
\langle\odelta_k\fmu,\ftau_h\rangle_{L^2\Lambda^{k-1}}-\langle\fmu,\od^{k-1}\ftau_h\rangle_{L^2\Lambda^k}=0,&\forall\,\ftau_h\in\fW_h\Lambda^{k-1}
\end{array}
\right. .
$$
By the definitions of $\mathbb{I}^{\rm m+\od,k}_{\od\cap\odelta,h}$ and $\mathbb{I}^{\rm m+\od,k}_{\od\cap\odelta,T}$, it holds that
\begin{multline*}
\langle\od^k_h\mathbb{I}^{\rm m+\od,k}_{\od\cap\odelta,h}\fmu,\feta_h\rangle_{L^2\Lambda^{k+1}}-\langle \mathbb{I}^{\rm m+\od,k}_{\od\cap\odelta,h}\fmu,\odelta_{k+1}\feta_h\rangle_{L^2\Lambda^k}
=\sum_{T\in\mathcal{G}_h}\langle\od^k\fmu,\feta_h\rangle_{L^2\Lambda^{k+1}(T)}-\langle\fmu,\odelta_{k+1}\feta_h\rangle_{L^2\Lambda^k(T)}
\\
=\langle\od^k\fmu,\feta_h\rangle_{L^2\Lambda^{k+1}(\Omega)}-\langle\fmu,\odelta_{k+1}\feta_h\rangle_{L^2\Lambda^k(\Omega)}=0,\forall\,\feta_h\in\fW^*_{h0}\Lambda^{k+1},
\end{multline*}
and similarly
$$
\langle\odelta_{k,h}\mathbb{I}^{\rm m+\od,k}_{\od\cap\odelta,h}\fmu,\ftau_h\rangle_{L^2\Lambda^{k-1}}-\langle \mathbb{I}^{\rm m+\od,k}_{\od\cap\odelta,h}\fmu,\od^{k-1}\ftau_h\rangle_{L^2\Lambda^k}=0,\ \forall\,\ftau_h\in\fW_h\Lambda^{k-1}. 
$$
Namely, $\mathbb{I}^{\rm m+\od,k}_{\od\cap\odelta,h}\fmu\in \fV_{\od\cap\odelta}^{\rm m, +\od}\Lambda^k$. The proof is completed. 
\end{proof}

By Lemma \ref{lem:approxcell}, we immediately obtain the lemma below.
\begin{lemma}\label{lem:globalinterror}
For $\fomega\in H\Lambda^k\cap H^*_0\Lambda^k$, with a constant $C_{k,n}$ depending on the shape regularity of $\mathcal{G}_h$, 
\begin{multline}
\|\fomega-\mathbb{I}^{\rm m+\od,k}_{\od\cap\odelta,h}\fomega\|_{L^2\Lambda^k} + \|\od^k\fomega-\od^k_h\mathbb{I}^{\rm m+\od,k}_{\od\cap\odelta,h}\fomega\|_{L^2\Lambda^{k+1}} + 
\|\odelta_k\fomega-\odelta_{k,h}\mathbb{I}^{\rm m+\od,k}_{\od\cap\odelta,h}\fomega\|_{L^2\Lambda^{k-1}}
\\
\leqslant C_{k,n}  \inf_{\fmu_h\in \pddempdL^k(\mathcal{G}_h)}\left[\|\fomega-\fmu_h\|_{L^2\Lambda^k} + \|\od^k_h(\fomega-\fmu_h)\|_{L^2\Lambda^{k+1}} + \|\odelta_{k,h}(\fomega-\fmu_h)\|_{L^2\Lambda^{k-1}}\right].
\end{multline}
\end{lemma}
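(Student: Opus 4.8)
The plan is to reduce the global estimate to the cell-wise bound of Lemma \ref{lem:approxcell} by exploiting two structural facts: the interpolator $\mathbb{I}^{\rm m+\od,k}_{\od\cap\odelta,h}$ together with the piecewise operators $\od^k_h$ and $\odelta_{k,h}$ all act cell by cell, and the discrete space $\pddempdL^k(\mathcal{G}_h)=\prod_{T\in\mathcal{G}_h}\pddempdL^k(T)$ carries no inter-element coupling. Since each of the three terms on the left is a global $L^2$-type norm, it decomposes orthogonally across the subdivision. Writing $A$, $B$, $C$ for the three left-hand terms and $A_T$, $B_T$, $C_T$ for their contributions on a cell $T$, one has $A^2=\sum_T A_T^2$ and likewise for $B$ and $C$, precisely because $(\mathbb{I}^{\rm m+\od,k}_{\od\cap\odelta,h}\fomega)|_T=\mathbb{I}^{\rm m+\od,k}_{\od\cap\odelta,T}(\fomega|_T)$ and $\od^k_h$, $\odelta_{k,h}$ are the element-wise exterior derivative and codifferential.

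First I would pass from the sum of the three global norms to a single square-sum over cells. Using $A+B+C\leqslant\sqrt{3}\,(A^2+B^2+C^2)^{1/2}$ with the orthogonal decompositions above gives
\[
A+B+C\leqslant\sqrt{3}\Big(\sum_{T\in\mathcal{G}_h}(A_T^2+B_T^2+C_T^2)\Big)^{1/2}\leqslant\sqrt{3}\Big(\sum_{T\in\mathcal{G}_h}L_T^2\Big)^{1/2},
\]
where $L_T:=A_T+B_T+C_T$ is exactly the left-hand side of the cell-wise estimate. Applying Lemma \ref{lem:approxcell} on each $T$ bounds $L_T\leqslant C_{k,n}R_T$, with $R_T:=\inf_{\fmu\in\pddempdL^k(T)}[\,\|\fomega-\fmu\|_{L^2\Lambda^k(T)}+\|\od^k(\fomega-\fmu)\|_{L^2\Lambda^{k+1}(T)}+\|\odelta_k(\fomega-\fmu)\|_{L^2\Lambda^{k-1}(T)}\,]$ the local best-approximation quantity; the shape regularity of $\mathcal{G}_h$ makes the constant $C_{k,n}$ uniform over all cells.

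It remains to control $(\sum_T R_T^2)^{1/2}$ by the global infimum, and this is where the product structure of $\pddempdL^k(\mathcal{G}_h)$ enters. For an arbitrary $\fmu_h\in\pddempdL^k(\mathcal{G}_h)$, its restriction $\fmu_h|_T$ is a free competitor in the local infimum, so $R_T$ is no larger than the $T$-contribution of $\fmu_h$; denoting those contributions $a_T$, $b_T$, $c_T$, one gets $R_T\leqslant a_T+b_T+c_T$, and hence, after the same $\ell^1$-$\ell^2$ comparison and splitting the sum,
\[
\Big(\sum_{T\in\mathcal{G}_h}R_T^2\Big)^{1/2}\leqslant\sqrt{3}\Big(\sum_{T\in\mathcal{G}_h}(a_T^2+b_T^2+c_T^2)\Big)^{1/2}\leqslant\sqrt{3}\,R(\fmu_h),
\]
where $R(\fmu_h)$ is the global bracket evaluated at $\fmu_h$ (using $\sum_T a_T^2=\|\fomega-\fmu_h\|_{L^2\Lambda^k}^2$, etc., and $(x^2+y^2+z^2)^{1/2}\leqslant x+y+z$). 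Taking the infimum over $\fmu_h$ and chaining with the previous display yields the claim with constant $3C_{k,n}$.

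The only subtlety, and it is a mild one, is the bookkeeping of the mismatch between the $\ell^1$-in-the-three-terms structure appearing on both sides and the $\ell^2$-over-cells (Pythagorean) structure forced by the $L^2$ norms; every other step is a direct summation of the already-established cell-wise bound, which is why the result follows immediately from Lemma \ref{lem:approxcell}. No inter-element continuity of $\fomega$ or of $\fmu_h$ is invoked, precisely because the interpolator is cell-wise defined and the discrete space is a product.
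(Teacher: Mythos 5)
Your argument is correct and is exactly the route the paper intends: the paper's proof consists of the single remark that the lemma follows immediately from Lemma \ref{lem:approxcell}, i.e.\ by summing the cell-wise bound over the subdivision using the cell-wise definition of the interpolator and the product structure of $\pddempdL^k(\mathcal{G}_h)$. Your explicit handling of the $\ell^1$-in-three-terms versus $\ell^2$-over-cells bookkeeping (at the cost of a harmless factor $3$) is precisely the detail the paper leaves implicit.
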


\section{A primal finite element scheme of the $\mathbf{H}(\od)\cap \mathbf{H}(\odelta)$ elliptic problem}
\label{sec:fescheme}

\subsection{Finite element scheme and error estimate}

Given a simplicial subdivision $\mathcal{G}_h$ of $\Omega$, we consider the finite element problem: find $\fomega_h\in \fV_{\od\cap\odelta}^{\rm m, +\od}\Lambda^k$, such that 
\begin{equation}\label{eq:primalhodgedis}
\langle\od^k_h\fomega_h,\od^k_h\fmu_h\rangle_{L^2\Lambda^{k+1}}+\langle\odelta_{k,}\fomega_h,\odelta_{k,h}\fmu_h\rangle_{L^2\Lambda^{k-1}}+\langle\fomega_h,\fmu_h\rangle_{L^2\Lambda^k}=\langle \ff,\fmu_h\rangle_{L^2\Lambda^k},\  \forall\,\fmu_h\in \fV_{\od\cap\odelta}^{\rm m, +\od}\Lambda^k.
\end{equation}
The well-posed-ness of \eqref{eq:primalhodgedis} is immediate. The main theoretical result is the theorem below. 

\begin{theorem}
Let $\fomega$ and $\fomega_h$ be the solutions of \eqref{eq:primalhodge} and \eqref{eq:primalhodgedis}, respectively. Then
\begin{multline}
\|\fomega-\fomega_h\|_{L^2\Lambda^k}+\|\od^k_h(\fomega-\fomega_h)\|_{L^2\Lambda^{k+1}}+\|\odelta_{k,h}(\fomega-\fomega_h)\|_{L^2\Lambda^{k-1}}
\\
\leqslant C\inf_{\fmu_h\in \pddempdL^k(\mathcal{G}_h)}\left[\|\fomega-\fmu_h\|_{L^2\Lambda^k}+\|\od^k_h(\fomega-\fmu_h)\|_{L^2\Lambda^{k+1}}+\|\odelta_{k,h}(\fomega-\fmu_h)\|_{L^2\Lambda^{k-1}}\right]
\\
+\inf_{\feta_h\in \fW^*_{h0}\Lambda^{k+1}}\left[\|\od^k\fomega-\feta_h\|_{L^2\Lambda^{k+1}}+\|\odelta_{k+1}(\od^k\fomega-\feta_h)\|_{L^2\Lambda^k}\right]
\\
+\inf_{\ftau_h\in \fW_h\Lambda^{k-1}}\left[\|\odelta_k\fomega-\ftau_h\|_{L^2\Lambda^{k-1}}+\|\od^{k-1}(\odelta_k\fomega-\ftau_h)\|_{L^2\Lambda^k}\right].
\end{multline}
\end{theorem}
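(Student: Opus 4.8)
The plan is to run the standard nonconforming (second Strang) argument, in which the entire consistency error collapses, via the dual continuity encoded in \eqref{eq:fems}, onto the two Whitney-form best-approximation terms on the right-hand side. Throughout write $a_h(\cdot,\cdot)$ for the bilinear form on the left of \eqref{eq:primalhodgedis} and introduce the graph norm
\begin{equation*}
\|\fmu_h\|_h^2:=\|\fmu_h\|_{L^2\Lambda^k}^2+\|\od^k_h\fmu_h\|_{L^2\Lambda^{k+1}}^2+\|\odelta_{k,h}\fmu_h\|_{L^2\Lambda^{k-1}}^2.
\end{equation*}
Since $a_h(\fmu_h,\fmu_h)=\|\fmu_h\|_h^2$ and $a_h$ is bounded by $\|\cdot\|_h$ with constant one, the form is coercive and continuous, so the second Strang lemma yields
\begin{equation*}
\|\fomega-\fomega_h\|_h\leqslant 2\inf_{\fmu_h\in\fV_{\od\cap\odelta}^{\rm m, +\od}\Lambda^k}\|\fomega-\fmu_h\|_h+\sup_{0\neq\fzeta_h\in\fV_{\od\cap\odelta}^{\rm m, +\od}\Lambda^k}\frac{|a_h(\fomega,\fzeta_h)-\langle\ff,\fzeta_h\rangle_{L^2\Lambda^k}|}{\|\fzeta_h\|_h}.
\end{equation*}
Note that the consistency (sup) term enters with coefficient exactly one because $\alpha=1$, matching the coefficients in the theorem.

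For the approximation term I would use the interpolant: since $\mathbb{I}^{\rm m+\od,k}_{\od\cap\odelta,h}\fomega\in\fV_{\od\cap\odelta}^{\rm m, +\od}\Lambda^k$ for $\fomega\in H\Lambda^k\cap H^*_0\Lambda^k$, the infimum over $\fV_{\od\cap\odelta}^{\rm m, +\od}\Lambda^k$ is bounded by $\|\fomega-\mathbb{I}^{\rm m+\od,k}_{\od\cap\odelta,h}\fomega\|_h$, and Lemma \ref{lem:globalinterror} bounds this by the first infimum (over the unconstrained broken space $\pddempdL^k(\mathcal{G}_h)$) on the right-hand side of the theorem. This disposes of the first line.

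The core is the consistency term. The strong form of \eqref{eq:primalhodge} gives $\od^k\fomega\in H^*_0\Lambda^{k+1}$, $\odelta_k\fomega\in H\Lambda^{k-1}$, and $\odelta_{k+1}\od^k\fomega+\od^{k-1}\odelta_k\fomega+\fomega=\ff$. Writing $\feta:=\od^k\fomega$ and $\ftau:=\odelta_k\fomega$ and substituting this identity into $\langle\ff,\fzeta_h\rangle_{L^2\Lambda^k}$, the $\langle\fomega,\fzeta_h\rangle$ terms cancel and the residual becomes a sum of exactly the two dual pairings appearing in \eqref{eq:fems}:
\begin{multline*}
a_h(\fomega,\fzeta_h)-\langle\ff,\fzeta_h\rangle_{L^2\Lambda^k}=\big[\langle\feta,\od^k_h\fzeta_h\rangle_{L^2\Lambda^{k+1}}-\langle\odelta_{k+1}\feta,\fzeta_h\rangle_{L^2\Lambda^k}\big]
\\
+\big[\langle\ftau,\odelta_{k,h}\fzeta_h\rangle_{L^2\Lambda^{k-1}}-\langle\od^{k-1}\ftau,\fzeta_h\rangle_{L^2\Lambda^k}\big].
\end{multline*}
For any $\feta_h\in\fW^*_{h0}\Lambda^{k+1}$ and $\ftau_h\in\fW_h\Lambda^{k-1}$ the defining relations of $\fV_{\od\cap\odelta}^{\rm m, +\od}\Lambda^k$ make the analogous pairings with $\feta_h,\ftau_h$ vanish identically; subtracting them replaces $\feta,\ftau$ by $\feta-\feta_h,\ftau-\ftau_h$. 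Cauchy--Schwarz, together with $\|\od^k_h\fzeta_h\|_{L^2\Lambda^{k+1}},\|\odelta_{k,h}\fzeta_h\|_{L^2\Lambda^{k-1}},\|\fzeta_h\|_{L^2\Lambda^k}\leqslant\|\fzeta_h\|_h$, then bounds the residual by
\begin{equation*}
\big(\|\feta-\feta_h\|_{L^2\Lambda^{k+1}}+\|\odelta_{k+1}(\feta-\feta_h)\|_{L^2\Lambda^k}+\|\ftau-\ftau_h\|_{L^2\Lambda^{k-1}}+\|\od^{k-1}(\ftau-\ftau_h)\|_{L^2\Lambda^k}\big)\|\fzeta_h\|_h,
\end{equation*}
and taking the infimum over $\feta_h$ and $\ftau_h$ produces precisely the second and third lines of the theorem.

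The step I expect to be the main obstacle is the consistency identity: one must verify that substituting the strong form and exploiting the dual constraints in \eqref{eq:fems} leaves \emph{no} inter-element jump or boundary contributions to estimate, i.e.\ that the nonconformity is fully absorbed into the Whitney-form approximation defects of $\od^k\fomega$ and $\odelta_k\fomega$. The only genuine care needed is the book-keeping that identifies $\feta\in H^*_0\Lambda^{k+1}$ (so that the global codifferential pairing $\langle\odelta_{k+1}\feta,\cdot\rangle$ is legitimate) and that matches signs and argument orders between the residual and the constraint \eqref{eq:fems}; everything else is Cauchy--Schwarz, with the constant $C$ absorbing the factor $2$ and the interpolation constant $C_{k,n}$ from Lemma \ref{lem:globalinterror}.
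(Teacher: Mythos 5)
Your proposal is correct and follows essentially the same route as the paper: Strang's lemma, substitution of the strong form $\odelta_{k+1}\od^k\fomega+\od^{k-1}\odelta_k\fomega+\fomega=\ff$ to rewrite the consistency error as the two dual pairings, subtraction of the vanishing pairings from the definition \eqref{eq:fems}, Cauchy--Schwarz, and Lemma \ref{lem:globalinterror} for the approximation term. Your write-up is in fact somewhat more explicit than the paper's (spelling out the coercivity/continuity constants and the role of $\od^k\fomega\in H^*_0\Lambda^{k+1}$), but there is no substantive difference.
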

\begin{proof}
According to Strang's lemma, the consistency error part is
\begin{multline*}
CE(\fmu_h)=\langle\od^k\fomega,\od^k_h\fmu_h\rangle_{L^2\Lambda^{k+1}}+\langle\odelta_k\fomega,\odelta_{k,h}\fmu_h\rangle_{L^2\Lambda^{k-1}}+\langle\fomega,\fmu_h\rangle_{L^2\Lambda^k}-\langle \ff,\fmu_h\rangle_{L^2\Lambda^k}
\\
=\langle\od^k\fomega,\od^k_h\fmu_h\rangle_{L^2\Lambda^{k+1}}-\langle\odelta_{k+1}\od^k\fomega,\fmu_h\rangle_{L^2\Lambda^k}+\langle\odelta_k\fomega,\odelta_{k,h}\fmu_h\rangle_{L^2\Lambda^{k-1}}-\langle\od^{k-1}\odelta_k\fomega,\fmu_h\rangle_{L^2\Lambda^k}.
\end{multline*}
By the definition of $\fV_{\od\cap\odelta}^{\rm m, +\od}\Lambda^k$, we have for any $\feta_h\in \fW^*_{h0}\Lambda^{k+1}$ and $\ftau_h\in \fW_h\Lambda^{k-1}$,
\begin{multline}
CE(\fmu_h)=\langle\od^k\fomega-\feta_h,\od^k_h\fmu_h\rangle_{L^2\Lambda^{k+1}}-\langle\odelta_{k+1}(\od^k\fomega-\feta_h),\fmu_h\rangle_{L^2\Lambda^k}
\\
+\langle\odelta_k\fomega-\ftau_h,\odelta_{k,h}\fmu_h\rangle_{L^2\Lambda^{k-1}}-\langle\od^{k-1}(\odelta_k\fomega-\ftau_h),\fmu_h\rangle_{L^2\Lambda^k}
\\
\leqslant (\|\od^k\fomega-\feta_h\|_{L^2\Lambda^{k+1}}+\|\odelta_{k+1}(\od^k\fomega-\feta_h)\|_{L^2\Lambda^k})(\|\fmu_h\|_{L^2\Lambda^k}+\|\odelta_{k,h}\fmu_h\|_{L^2\Lambda^{k-1}})
\\
+(\|\odelta_k\fomega-\ftau_h\|_{L^2\Lambda^{k-1}}+\|\od^{k-1}(\odelta_k\fomega-\ftau_h))(\|\fmu_h\|_{L^2\Lambda^k}+\|\od^k_h\fmu_h\|_{L^2\Lambda^{k+1}}).
\end{multline}
This estimate, together with the approximation estimate Lemma \ref{lem:globalinterror}, leads to the total error estimation, and completes the proof. 
\end{proof}

\subsection{Implementation of the scheme: a set of locally supported basis functions}

The finite element space $\fV_{\od\cap\odelta}^{\rm m, +\od}\Lambda^k$ does not correspond to a ``finite element" defined as Ciarlet's triple \cite{Ciarlet.P1978book}. Though, in this section, we present a set of basis functions of $\fV_{\od\cap\odelta}^{\rm m, +\od}\Lambda^k$ which are tightly supported. Therefore, the finite element scheme can still be implemented by the standard routine. 

To illustrate the general procedure in Section \ref{sec:generalproce}, a two-dimensional example is given in Section \ref{sec:examples}, where we particularly refer to Figures \ref{fig:basisinteriorvertex} and \ref{fig:boundaryvertex} for the illustration of the local supports of the basis functions.

\subsubsection{A general procedure}\label{sec:generalproce}
Denote
$$
\pddedmpdL^k(T):=\left\{\langle\odelta_k \fmu,\ftau\rangle_{L^2\Lambda^{k-1}(T)}-\langle \fmu, \od^{k-1} \ftau\rangle_{L^2\Lambda^k(T)}=0,\ \forall\,\ftau\in \mathcal{P}^-_1\Lambda^{k-1}(T)\right\},
$$
and
$$
\pddedempdL^k(T):=\left\{\langle\od^k \fmu,\feta\rangle_{L^2\Lambda^{k+1}(T)}-\langle \fmu, \odelta_{k+1} \feta\rangle_{L^2\Lambda^k(T)}=0, \forall\,\feta\in\mathcal{P}^{*,-}_1\Lambda^{k+1}(T)\right\}.
$$
Then $\pddedmpdL^k(T)$ is unisolvent by $\left\{\langle\od^k \fmu,\feta\rangle_{L^2\Lambda^{k+1}(T)}-\langle \fmu, \odelta_{k+1} \feta\rangle_{L^2\Lambda^k(T)},\ \ \feta\in\mathcal{P}^{*,-}_1\Lambda^{k+1}(T)\right\}$, $\pddedempdL^k(T)$ is unisolvent by $\left\{\langle\odelta_k \fmu,\ftau\rangle_{L^2\Lambda^{k-1}(T)}-\langle \fmu, \od^{k-1} \ftau\rangle_{L^2\Lambda^k(T)},\ \ftau\in \mathcal{P}^-_1\Lambda^{k-1}(T)\right\}$. Denote 
$$
\pddedmpdL^k(\mathcal{G}_h)=\prod_{T\in\mathcal{G}_h}\pddedmpdL^k(T),\ \  \pddedempdL^k(\mathcal{G}_h)=\prod_{T\in\mathcal{G}_h}\pddedempdL^k(T).
$$ 
Then 
$$
\pddempdL^k(T)=\pddedmpdL^k(T)\oplus \pddedempdL^k(T)\ \ \mbox{and}\ \ \pddempdL^k(\mathcal{G}_h)=\pddedmpdL^k(\mathcal{G}_h)\oplus \pddedempdL^k(\mathcal{G}_h).
$$
It follows that
\begin{multline*}
\fV_{\od\cap\odelta}^{\rm m, +\od}\Lambda^k=\Big\{\fmu_h\in\pddempdL^k(\mathcal{G}_h):
\langle\od^k_h\fmu_h,\feta_h\rangle_{L^2\Lambda^{k+1}}-\langle\fmu_h,\odelta_{k+1}\feta_h\rangle_{L^2\Lambda^k}=0,\ \forall\,\feta_h\in\fW^*_{h0}\Lambda^{k+1},\quad
\\
\mbox{and}\ \ \langle\odelta_{k,h}\fmu_h,\ftau_h\rangle_{L^2\Lambda^{k-1}}-\langle\fmu_h,\od^{k-1}\ftau_h\rangle_{L^2\Lambda^k}=0,\ \forall\,\ftau_h\in\fW_h\Lambda^{k-1}
\Big\}
\\
=\Big\{\fmu_h\in\pddedmpdL^k(\mathcal{G}_h)\oplus \pddedempdL^k(\mathcal{G}_h):
\langle\od^k_h\fmu_h,\feta_h\rangle_{L^2\Lambda^{k+1}}-\langle\fmu_h,\odelta_{k+1}\feta_h\rangle_{L^2\Lambda^k}=0,\ \forall\,\feta_h\in\fW^*_{h0}\Lambda^{k+1},
\\
\mbox{and}\ \ \langle\odelta_{k,h}\fmu_h,\ftau_h\rangle_{L^2\Lambda^{k-1}}-\langle\fmu_h,\od^{k-1}\ftau_h\rangle_{L^2\Lambda^k}=0,\ \forall\,\ftau_h\in\fW_h\Lambda^{k-1}
\Big\}
\\
=\Big\{\fmu_h\in \pddedempdL^k(\mathcal{G}_h):\langle\odelta_{k,h}\fmu_h,\ftau_h\rangle_{L^2\Lambda^{k-1}}-\langle\fmu_h,\od^{k-1}\ftau_h\rangle_{L^2\Lambda^k}=0,\ \forall\,\ftau_h\in\fW_h\Lambda^{k-1}
\Big\}\qquad\qquad\qquad
\\
\oplus \Big\{\fmu_h\in\pddedmpdL^k(\mathcal{G}_h):\ \langle\od^k_h\fmu_h,\feta_h\rangle_{L^2\Lambda^{k+1}}-\langle\fmu_h,\odelta_{k+1}\feta_h\rangle_{L^2\Lambda^k}=0,\ \forall\,\feta_h\in\fW^*_{h0}\Lambda^{k+1}
\Big\}:=\fV_{\odelta}\oplus\fV_{\od}.
\end{multline*}
Now we figure out the basis functions of $\fV_{\odelta}$ and $\fV_{\od}$, respectively. They two form the whole set of basis functions of $\fV_{\od\cap\odelta}^{\rm m, +\od}\Lambda^k$.

\paragraph{\bf Basis functions of $\fV_{\odelta}$} The basis functions of $\fV_{\odelta}$ are determined by a 3-step procedure. 
\begin{description}
\item[Step 1]
Let $\mathbf{B}_{\bf W}^{k-1}=\{\fpsi_j\}_{j=1}^{\dim(\fW_h\Lambda^{k-1})}$ be the set of nodal basis functions of $\fW_h\Lambda^{k-1}$; then on every simplex $T$, the restrictions $\fpsi_j|_T$ of those $\fpsi_j$ that are nonzero on $T$ are linearly independent, and such $\fpsi_j|_T$ expand $\mathcal{P}^-_1\Lambda^{k-1}(T)$;
\item[Step 2] for any $T\in\mathcal{G}_h$, set $I^T:=\left\{1\leqslant i\leqslant \dim(\fW_h\Lambda^{k-1}):\mathring{T}\cap {\rm supp}(\fpsi_i)\neq\emptyset\right\}$, and there exist a set of functions $\left\{\fmu^T_i:i\in I^T\right\}\subset \pddedempdL^k(T)$, such that $\langle\odelta_k\fmu^T_i,\fpsi_j|_T\rangle_{L^2\Lambda^{k-1}}-\langle\fmu^T_i,\od^{k-1}\fpsi_j|_T\rangle_{L^2\Lambda^k}=\delta_{ij}$. Then $\pddedempdL^k(T)={\rm span}\{\fmu^T_i:\ i\in I^T\}$, and all $\fmu_i^T$ are linearly independent. 
\item[Step 3] Now, by definition,
\begin{multline*}
\fV_{\odelta}=\left\{\fmu_h\in \pddedempdL^k(\mathcal{G}_h):\langle\odelta_{k,h}\fmu_h,\ftau_h\rangle_{L^2\Lambda^{k-1}}-\langle\fmu_h,\od^{k-1}\ftau_h\rangle_{L^2\Lambda^k}=0,\ \forall\,\ftau_h\in\fW_h\Lambda^{k-1}\right\}
\\
=\left\{\fmu_h\in \sum_{T\in\mathcal{G}_h}\sum_{i\in I^T}{\rm span}\{E_T^\Omega\fmu_i^T\}:\langle\odelta_{k,h}\fmu_h,\fpsi_j\rangle_{L^2\Lambda^{k-1}}-\langle\fmu_h,\od^{k-1}\fpsi_j\rangle_{L^2\Lambda^k}=0,\ \forall\,\fpsi_j\in \mathbf{B}_{\bf W}^{k-1}\right\}
\\
=\sum_{1\leqslant j\leqslant \dim(\fW_h\Lambda^{k-1})}\left\{\fmu_h\in\sum_{\mathring{T}\cap{\rm supp}(\fpsi_j)\neq\emptyset}{\rm span}\{E_T^\Omega\fmu_j^T\}:\langle\odelta_{k,h}\fmu_h,\fpsi_j\rangle_{L^2\Lambda^{k-1}}-\langle\fmu_h,\od^{k-1}\fpsi_j\rangle_{L^2\Lambda^k}=0\right\}.
\end{multline*}
\end{description}
Namely, a set of basis functions of $\fV_{\odelta}$ consists of, for $1\leqslant j\leqslant \dim(\fW_h\Lambda^{k-1})$, functions 
\begin{equation}
\fmu_h\in\sum_{\mathring{T}\cap{\rm supp}(\fpsi_j)\neq\emptyset}{\rm span}\{E_T^\Omega\fmu_j^T\},\ \ \mbox{such\ that}\  \langle\odelta_{k,h}\fmu_h,\fpsi_j\rangle_{L^2\Lambda^{k-1}}-\langle\fmu_h,\od^{k-1}\fpsi_j\rangle_{L^2\Lambda^k}=0.
\end{equation}
Note that the supports of these functions are contained in the support of $\fpsi_j$. 

\paragraph{\bf Basis functions of $\fV_{\od}$} The basis functions of $\fV_{\od}$ are determined by a 3-step procedure, slightly different from the procedure for $\fV_{\odelta}$. They basis functions have basically two categories. 
\begin{description}
\item[Step 1]
Let $\mathbf{B}_{\bf W,0}^{*,k+1}=\{\fpsi^*_j\}_{j=1}^{\dim(\fW^*_{h0}\Lambda^{k+1})}$ be the set of nodal basis functions of $\fW^*_{h0}\Lambda^{k+1}$; then on every simplex $T$, the restrictions $\fpsi^*_j|_T$ of those $\fpsi^*_j$ that are nonzero on $T$ are linearly independent, and for interior $T$, such $\fpsi^*_j|_T$ expand $\mathcal{P}^{*,-}_1\Lambda^{k+1}(T)$;
\item[Step 2] For any $T\in\mathcal{G}_h$, set $I^T:=\left\{1\leqslant i\leqslant \dim(\fW^*_h\Lambda^{k+1}):\mathring{T}\cap {\rm supp}(\fpsi^*_i)\neq\emptyset\right\}$, and there exist, not necessarily unique, a set of functions $\left\{\fmu^T_i:i\in I^T\right\}\subset \pddedmpdL^k(T)$, such that $\langle\od^k\fmu^T_i,\fpsi^*_j|_T\rangle_{L^2\Lambda^{k+1}}-\langle\fmu^T_i,\odelta_{k+1}\fpsi^*_j|_T\rangle_{L^2\Lambda^k}=\delta_{ij}$. Denote $({\rm span}\{\fmu^T_i:\ i\in I^T\})^{\rm c}=\{\fmu\in \pddedmpdL^k(T): \langle\od^k\fmu,\fpsi^*_j|_T\rangle_{L^2\Lambda^{k+1}}-\langle\fmu,\odelta_{k+1}\fpsi^*_j|_T\rangle_{L^2\Lambda^k}=0,\ j\in I^T\}$, not necessarily non-empty. Then $$
\pddedmpdL^k(T)={\rm span}\{\fmu^T_i:\ i\in I^T\}+({\rm span}\{\fmu^T_i:\ i\in I^T\})^{\rm c}.
$$
\item[Step 3] By definition,
$$
\begin{array}{rl}
\fV_{\od}=&\displaystyle\Big\{\fmu_h\in\pddedmpdL^k(\mathcal{G}_h):\ \langle\od^k_h\fmu_h,\feta_h\rangle_{L^2\Lambda^{k+1}}-\langle\fmu_h,\odelta_{k+1}\feta_h\rangle_{L^2\Lambda^k}=0,\ \forall\,\feta_h\in\fW^*_{h0}\Lambda^{k+1}
\Big\}
\\
=&\displaystyle\prod_{T\in\mathcal{G}_h} ({\rm span}\{\fmu^T_i:\ i\in I^T\})^{\rm c}
\\
&\  \displaystyle\oplus\left\{\fmu_h\in \sum_{T\in\mathcal{G}_h}\sum_{i\in I^T}{\rm span}\{E_T^\Omega\fmu_i^T\}:\langle\od^k_h\fmu_h,\fpsi^*_h\rangle_{L^2\Lambda^{k+1}}-\langle\fmu_h,\odelta_{k+1}\fpsi^*_h\rangle_{L^2\Lambda^k}=0,\ \forall\,\fpsi^*_j\in \mathbf{B}_{\bf W,0}^{k-1}\right\}
\\
=&\displaystyle\prod_{T\in\mathcal{G}_h} ({\rm span}\{\fmu^T_i:\ i\in I^T\})^{\rm c}
\\
&\ \displaystyle\oplus \sum_{1\leqslant j\leqslant \dim(\fW_{h0}\Lambda^{k+1})}\left\{\fmu_h\in\sum_{\mathring{T}\cap{\rm supp}(\fpsi_j)\neq\emptyset}{\rm span}\{E_T^\Omega\fmu_j^T\}:\langle\od^k_h\fmu_h,\fpsi^*_h\rangle_{L^2\Lambda^{k+1}}-\langle\fmu_h,\odelta_{k+1}\fpsi^*_h\rangle_{L^2\Lambda^k}=0\right\}.
\end{array}
$$
\end{description}
Namely, a set of basis functions of $\fV_{\od}$ consists of two parts: 
\begin{enumerate}
\item Part I: $\displaystyle\prod_{T\in\mathcal{G}_h} ({\rm span}\{\fmu^T_i:\ i\in I^T\})^{\rm c}$. The supports of these functions are each contained in a single simplex.
\item Part II: for $1\leqslant j\leqslant \dim(\fW^*_{h0}\Lambda^{k+1})$, functions 
$$
\displaystyle\fmu_h\in\sum_{\mathring{T}\cap{\rm supp}(\fpsi^*_j)\neq\emptyset}{\rm span}\{E_T^\Omega\fmu_j^T\}\ \ \mbox{such\ that}\ 
\langle\od^k_h\fmu_h,\fpsi^*_j\rangle_{L^2\Lambda^{k+1}}-\langle\fmu_h,\odelta_{k+1}\fpsi^*_j\rangle_{L^2\Lambda^k}=0.
$$
Note that the supports of these functions are contained in the support of $\fpsi^*_j$. 
\end{enumerate}

%

%
\subsubsection{Examples}\label{sec:examples}

We take the two-dimensional $H\Lambda^1\cap H^*_0\Lambda^1$ problem for example. Let $\Omega$ be a polygon. The problem reads: find $\fomega\in H(\rot,\Omega)\cap H_0(\dv,\Omega)$, such that 
\begin{equation}
(\rot\fomega,\rot\fmu)+(\dv\fomega,\dv\fmu)+(\fomega,\fmu)=(\ff,\fmu),\ \forall\,\fmu\in H(\rot,\Omega)\cap H_0(\dv,\Omega). 
\end{equation}
The corresponding spaces are $H^1(\Omega)=H({\rm grad},\Omega)$ for $H\Lambda^0$ and $H^1_0(\Omega)=H_0(\curl,\Omega)$ for $H^*_0\Lambda^2$, respectively. We use for the conforming spaces of Whitney forms the linear element space $V^1_h$ for $H({\rm grad},\Omega)$ and $V^1_{h0}=V^1_h\cap H^1_0(\Omega)$ for $H_0(\curl,\Omega)$.

Let $\mathcal{T}_h$ be a shape-regular triangular subdivision of $\Omega$ with mesh size $h$, such that $\overline\Omega=\cup_{T\in\mathcal{T}_h}\overline T$, and every boundary vertex is connected to at least one interior vertex. Denote by $\mathcal{E}_h$, $\mathcal{E}_h^i$, $\mathcal{E}_h^b$, $\mathcal{X}_h$, $\mathcal{X}_h^i$, $\mathcal{X}_h^b$ and $\mathcal{X}_h^c$ the set of edges, interior edges, boundary edges, vertices, interior vertices, boundary vertices and corners, respectively.

In the setting, the shape function space is
$$
\pddempdL^1(T)=\mathbf{P}^{\rm m+rot}_{\rot\cap\dv}(T)={\rm span}\left\{\left(\begin{array}{c}1\\ 0\end{array}\right),\ \left(\begin{array}{c}0\\ 1\end{array}\right),\ \left(\begin{array}{c}\tilde{x}\\ \tilde{y}\end{array}\right),\ \left(\begin{array}{c}\tilde{y}\\ -\tilde{x}\end{array}\right),\ \left(\begin{array}{c}\tilde{y}^2\\ 0\end{array}\right),\ \left(\begin{array}{c}0\\ \tilde{x}^2\end{array}\right)\right\}.
$$

\begin{figure}[htbp]
\begin{tikzpicture}[scale=1.1]

\path 	coordinate (a0b0) at (0,0)
coordinate (a0b1) at (0,1)
coordinate (a0b2) at (0,2)
coordinate (a0b3) at (0,3)
coordinate (a0b4) at (0,4)
coordinate (a1b0) at (1,0)
coordinate (a1b1) at (1,1)
coordinate (a1b2) at (1,2)
coordinate (a1b3) at (1,3)
coordinate (a1b4) at (1,4)
coordinate (a2b0) at (2,0)
coordinate (a2b1) at (2,1)
coordinate (a2b2) at (2,2)
coordinate (a2b3) at (2,3)
coordinate (a2b4) at (2,4)
coordinate (a3b0) at (3,0)
coordinate (a3b1) at (3,1)
coordinate (a3b2) at (3,2)
coordinate (a3b3) at (3,3)
coordinate (a3b4) at (3,4)
coordinate (a4b0) at (4,0)
coordinate (a4b1) at (4,1)
coordinate (a4b2) at (4,2)
coordinate (a4b3) at (4,3)
coordinate (a4b4) at (4,4);

\draw[line width=.4pt]  (a0b0) -- (a0b4) ;
\draw[line width=.4pt]  (a1b0) -- (a1b4) ;
\draw[line width=.4pt]  (a2b0) -- (a2b4) ;
\draw[line width=.4pt]  (a3b0) -- (a3b4) ;
\draw[line width=.4pt]  (a4b0) -- (a4b4) ;
\draw[line width=.4pt]  (a0b0) -- (a4b0) ;
\draw[line width=.4pt]  (a0b1) -- (a4b1) ;
\draw[line width=.4pt]  (a0b2) -- (a4b2) ;
\draw[line width=.4pt]  (a0b3) -- (a4b3) ;
\draw[line width=.4pt]  (a0b4) -- (a4b4) ;
\draw[line width=.4pt]  (a0b0) -- (a4b0) ;

\draw[line width=.4pt]  (a0b3) -- (a1b4) ;
\draw[line width=.4pt]  (a0b2) -- (a2b4) ;
\draw[line width=.4pt]  (a0b1) -- (a3b4) ;
\draw[line width=.4pt]  (a1b0) -- (a4b3) ;
\draw[line width=.4pt]  (a2b0) -- (a4b2) ;
\draw[line width=.4pt]  (a3b0) -- (a4b1) ;
\draw[line width=.4pt]  (a0b0) -- (a4b4) ;

\draw[fill] (a3b2) circle [radius=0.05];
\node[below right] at (a3b2) {$A$};
\draw[fill] (a2b1) circle [radius=0.05];
\node[below right] at (a2b1) {$B$};
\draw[fill] (a1b0) circle [radius=0.05];
\node[below] at (a1b0) {$C$};
 
\begin{scope}
\fill[pattern=horizontal lines] (a2b1)--(a3b1)--(a4b2)--(a4b3)--(a3b3)--(a2b2)--(a2b1);
\fill[pattern=vertical lines] (a1b0)--(a2b0)--(a3b1)--(a3b2)--(a2b2)--(a1b1)--(a1b0);
\fill[pattern=north west lines] (a0b0)--(a2b0)--(a2b1)--(a1b1)--(a0b0);
\end{scope}


\path 	coordinate (ca0b0) at (5,0)
coordinate (ca1b0) at (6,0)
coordinate (ca2b0) at (7,0)
coordinate (ca2b1) at (7,1)
coordinate (ca1b1) at (6,1);

\draw[line width=.4pt]  (ca2b1) -- (ca2b0) -- (ca1b0) -- (ca0b0) -- (ca1b1) -- (ca2b1) -- (ca1b0) -- (ca1b1);


\path 	coordinate (ba1b0) at (8,0)
coordinate (ba2b0) at (9,0)
coordinate (ba3b1) at (10,1)
coordinate (ba3b2) at (10,2)
coordinate (ba2b2) at (9,2)
coordinate (ba1b1) at (8,1)
coordinate (ba2b1) at (9,1);

\draw[line width=.4pt]  (ba1b1) -- (ba2b2);
\draw[line width=.4pt]  (ba1b0) -- (ba3b2);
\draw[line width=.4pt]  (ba2b0) -- (ba3b1);
\draw[line width=.4pt]  (ba2b2) -- (ba3b2);
\draw[line width=.4pt]  (ba1b1) -- (ba3b1);
\draw[line width=.4pt]  (ba1b0) -- (ba2b0);
\draw[line width=.4pt]  (ba1b0) -- (ba1b1);
\draw[line width=.4pt]  (ba2b0) -- (ba2b2);
\draw[line width=.4pt]  (ba3b1) -- (ba3b2);


\path 	coordinate (aa2b1) at (11,1)
coordinate (aa3b1) at (12,1)
coordinate (aa4b2) at (13,2)
coordinate (aa4b3) at (13,3)
coordinate (aa3b3) at (12,3)
coordinate (aa2b2) at (11,2)
coordinate (aa3b2) at (12,2);

\draw[line width=.4pt]  (aa2b2) -- (aa3b3);
\draw[line width=.4pt]  (aa2b1) -- (aa4b3);
\draw[line width=.4pt]  (aa3b1) -- (aa4b2);
\draw[line width=.4pt]  (aa3b3) -- (aa4b3);
\draw[line width=.4pt]  (aa2b2) -- (aa4b2);
\draw[line width=.4pt]  (aa2b1) -- (aa3b1);
\draw[line width=.4pt]  (aa2b1) -- (aa2b2);
\draw[line width=.4pt]  (aa3b1) -- (aa3b3);
\draw[line width=.4pt]  (aa4b2) -- (aa4b3);

\begin{scope}
\fill[pattern=horizontal lines] (aa2b1)--(aa3b1)--(aa4b2)--(aa4b3)--(aa3b3)--(aa2b2)--(aa2b1);
\fill[pattern=vertical lines] (ba1b0)--(ba2b0)--(ba3b1)--(ba3b2)--(ba2b2)--(ba1b1)--(ba1b0);
\fill[pattern=north west lines] (ca0b0)--(ca2b0)--(ca2b1)--(ca1b1)--(ca0b0);
\end{scope}

\draw[fill] (aa3b2) circle [radius=0.05];
\node[below right] at (aa3b2) {$A$};
\draw[fill] (ba2b1) circle [radius=0.05];
\node[below right] at (ba2b1) {$B$};
\draw[fill] (ca1b0) circle [radius=0.05];
\node[below] at (ca1b0) {$C$};

\end{tikzpicture}
\caption{Illustration of supports of $\psi_A$, $\psi_{\rm B}$ and $\psi_C$, as well as $\psi^*_A$ and $\psi^*_{\rm B}$. $A$ (as well as $B$) denotes an interior vertex, and $C$ denotes a boundary vertex.}\label{fig:vertices}
\end{figure}
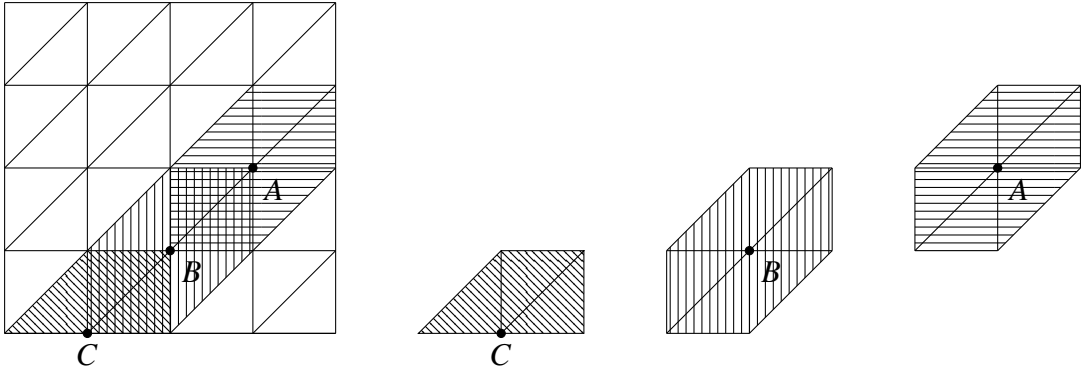

\begin{multline}\label{eq:defspaceintwod}
\fV_{\od\cap\odelta}^{\rm m, +\od}\Lambda^k=\fV_{\rot\cap\dv}^{\rm m, +\rot}:=\Bigg\{\fmu_h\in\prod_{T\in\mathcal{G}_h}\mathbf{P}^{\rm m+rot}_{\rot\cap\dv}(T):
(\rot_h\fmu_h,\feta_h)-(\fmu_h,\curl\feta_h)=0,\ \forall\,\feta_h\in V^1_{h0},
\\
\mbox{and}\ \ (\dv_h\fmu_h,\ftau_h)-(\fmu_h,{\rm grad}\ftau_h)=0,\forall\,\ftau_h\in V^1_h. \Bigg\}
\end{multline}

Now we present the basis functions of $\fV_{\od\cap\odelta}^{\rm m, +\od}\Lambda^k$ by a 3-step procedure. 

\paragraph{\bf Step 1} Both $V^1_h$ and $V^1_{h0}$ admit locally supported basis functions, denoted by $\phi_a$ and $\psi_a$ associated with vertices $a\in\mathcal{X}_h$ and $a\in\mathcal{X}_h^i$, respectively. The restrictions of $\phi_a$ and $\psi_a$ on a triangle are each one of the barycentric coordinates on the triangle. We refer to Figure \ref{fig:vertices} for an illustration of interior and boundary vertices, and also the supports of $\phi_a$($\psi_a$). 

\paragraph{\bf Step 2} On a cell $T$, with $a_i\in\mathcal{X}_h$ being its vertices, let $\lambda^{a_i}_T$ be the barycentric coordinates of $T$, and find $\fmu^{\dv}_{a_i,T},\ \fmu^{\rot}_{a_i,T}\in \pddempdL^1(T)$, $1\leqslant i\leqslant 3$, such that 
$$
(\rot \fmu^{\dv}_{a_i,T},\lambda_T^{a_j})_T-(\fmu^{\dv}_{a_i,T},\curl\lambda_T^{a_j})=0,\ \  (\dv\fmu^{\dv}_{a_i,T},\lambda_T^{a_j})_T-(\fmu^{\dv}_{a_i,T},{\rm grad}\lambda_T^{a_j})=\delta_{ij}, \ \ 1\leqslant i,j\leqslant 3,
$$
and
$$
(\rot \fmu^{\rot}_{a_i,T},\lambda_T^{a_j})_T-(\fmu^{\dv}_{a_i,T},\curl\lambda_T^{a_j})=\delta_{ij},\ \  (\dv\fmu^{\rot}_{a_i,T},\lambda_T^{a_j})_T-(\fmu^{\dv}_{a_i,T},{\rm grad}\lambda_T^{a_j})=0,\ \ 1\leqslant i,j\leqslant 3. 
$$

\paragraph{\bf Step 3} The global basis functions of $\fV_{\od\cap\odelta}^{\rm m, +\od}\Lambda^k$ fall into two categories, corresponding to $\fV_{\odelta}$ ($\fV_{\dv}$ here) and $\fV_{\od}$ ($\fV_{\rot}$ here), respectively. 
\begin{description}
\item[Category I] All these functions in ${\rm span}\{\fmu^{\dv}_{a,T}:a\in\mathcal{X}_h,\ T\in\mathcal{G}_h\}$ such that conditions in \eqref{eq:defspaceintwod} are satisfied with respect to every $\phi_a\in V^1_h$ a basis function. Particularly, with respect to any vertex $a$, the associated basis functions of $\fV_{\od\cap\odelta}^{\rm m, +\od}\Lambda^k$ are all these functions in ${\rm span}\{\fmu^{\dv}_{a,T}:\mathring{T}\cap {\rm supp}(\phi_a)\neq\emptyset\}$, namely, functions 
$$
\fomega_h=\sum_{\partial T\ni a}c_TE_T^\Omega\fmu^{\dv}_{a,T},\ \ \mbox{such\ that}\ 
\sum_{\partial T\ni a}(\dv\fomega_h,\phi_a|_T)_T+(\fomega_h,\nabla\phi_a|_T)_T=0.
$$
Those $\fomega_h$ in this category are each supported in a two-successive-cell patch. We refer to Figure \ref{fig:basisinteriorvertex} for the case $a\in\mathcal{X}_h^i$, and to Figure \ref{fig:boundaryvertex} (right) for an illustration that $a\in\mathcal{X}_h^b$. 

\item[Category II] All these functions in ${\rm span}\{\fmu^{\rot}_{a,T}:a\in\mathcal{X}_h,\ T\in\mathcal{G}_h\}$ such that conditions in \eqref{eq:defspaceintwod} are satisfied with respect to every $\psi_a\in V^1_{h0}$ a basis function. Particularly, with respect to an interior vertex $a$, the associated basis functions of $\fV_{\od\cap\odelta}^{\rm m, +\od}\Lambda^k$ are all these functions in ${\rm span}\{\fmu^{\rot}_{a,T}:\mathring{T}\cap {\rm supp}(\phi_a)\neq\emptyset\}$, namely, functions 
$$
\fomega_h=\sum_{\partial T\ni a}c_TE_T^\Omega\fmu^{\rot}_{a,T},\ \mbox{such\ that}\ \sum_{\partial T\ni a}(\rot\fomega_h,\phi_a|_T)_T-(\fomega_h,\curl\phi_a|_T)_T=0.
$$
Those $\fomega_h$ in this category are each supported in a two-successive-cell patch. We refer to Figure \ref{fig:basisinteriorvertex} for the case $a\in\mathcal{X}_h^i$. 

With respect to a boundary vertex $a$, the associated basis functions of $\fV_{\od\cap\odelta}^{\rm m, +\od}\Lambda^k$ are all these functions in ${\rm span}\{\fmu^{\rot}_{a,T}:\mathring{T}\cap {\rm supp}(\phi_a)\neq\emptyset\}$. The supports of the functions are each a single triangle. We refer to Figure \ref{fig:boundaryvertex}, left, for an illustration. 
\end{description}

\begin{figure}
\begin{tikzpicture}[scale=1.3]
\path 	
coordinate (a1mb0) at (0.9,0)
coordinate (a1pb0m) at (1.1,-0.1)
coordinate (a1pb0p) at (1.1,0.1)
coordinate (a1mb0m) at (0.9,-0.1)
coordinate (a1mb0p) at (0.9,0.1)
coordinate (a2pb0p) at (2.1,0.1)
coordinate (a2pb0m) at (2.1,-0.1)
coordinate (a2mb0p) at (1.9,0.1)
coordinate (a2mb0m) at (1.9,-0.1)
coordinate (a1b1m) at (1,0.9)
coordinate (a1mb1) at (0.9,1)
coordinate (a1pb1p) at (1.1,1.1)
coordinate (a1pb1m) at (1.1,0.9)
coordinate (a1mb1p) at (0.9,1.1)
coordinate (a1mb1m) at (0.9,0.9)
coordinate (a2b1m) at (2,0.9)
coordinate (a2pb1p) at (2.1,1.1)
coordinate (a2pb1m) at (2.1,0.9)
coordinate (a2mb1p) at (1.9,1.1)
coordinate (a2mb1m) at (1.9,0.9)
coordinate (a3pb1p) at (3.1,1.1)
coordinate (a3pb1m) at (3.1,0.9)
coordinate (a3mb1p) at (2.9,1.1)
coordinate (a3mb1m) at (2.9,0.9)
coordinate (a1mb2) at (0.9,2)
coordinate (a1pb2p) at (1.1,2.1)
coordinate (a1pb2m) at (1.1,1.9)
coordinate (a1mb2p) at (0.9,2.1)
coordinate (a1mb2m) at (0.9,1.9)
coordinate (a2pb2) at (2.1,2)
coordinate (a2b2p) at (2,2.1)
coordinate (a2b2m) at (2,1.9)
coordinate (a2mb2) at (1.9,2)
coordinate (a2pb2p) at (2.1,2.1)
coordinate (a2pb2m) at (2.1,1.9)
coordinate (a2mb2p) at (1.9,2.1)
coordinate (a2mb2m) at (1.9,1.9)
coordinate (a3pb2) at (3.1,2)
coordinate (a3pb2p) at (3.1,2.1)
coordinate (a3pb2m) at (3.1,1.9)
coordinate (a3mb2p) at (2.9,2.1)
coordinate (a3mb2m) at (2.9,1.9)
coordinate (a2b3p) at (2,3.1)
coordinate (a2pb3p) at (2.1,3.1)
coordinate (a2pb3m) at (2.1,2.9)
coordinate (a2mb3p) at (1.9,3.1)
coordinate (a2mb3m) at (1.9,2.9)
coordinate (a3pb3) at (3.1,3)
coordinate (a3b3p) at (3,3.1)
coordinate (a3pb3p) at (3.1,3.1)
coordinate (a3pb3m) at (3.1,2.9)
coordinate (a3mb3p) at (2.9,3.1)
coordinate (a3mb3m) at (2.9,2.9);

\draw[line width=.4pt]  (a1mb1) -- (a1mb2) -- (a2mb2) -- cycle;
\draw[line width=.4pt]  (a1b1m) -- (a2b2m) -- (a2b1m) -- cycle;
\draw[line width=.4pt]  (a1mb2p) -- (a2mb2p) -- (a2mb3p) -- cycle;
\draw[line width=.4pt]  (a2b2p) -- (a2b3p) -- (a3b3p) -- cycle;
\draw[line width=.4pt]  (a2pb2) -- (a3pb2) -- (a3pb3) -- cycle;
\draw[line width=.4pt]  (a2pb2m) -- (a3pb2m) -- (a2pb1m) -- cycle;

\node[left] at (2,2.6) {$+$};
\node[above] at (1.5,2) {$+$};
\node[left] at (1.4,2.6) {$-$};

\node at(1.5,2.4){$T_1$};

\node[right] at (1.9,2.7) {$+$};
\node[above] at (2.45,2.5) {$+$};
\node[above] at (2.5,3) {$-$};

\node at(2.3,2.9) {$T_6$};

\node[right] at (3,2.4) {$-$};
\node[above] at (2.7,1.9) {$+$};
\node[right] at (2.4,2.4) {$+$};

\node at(2.9,2.3) {$T_5$};

\node[right] at (2,1.35) {$+$};
\node[right] at (2.5,1.3) {$-$};
\node[below] at (2.6,2) {$+$};

\node at(2.4,1.5){$T_4$};

\node[right] at (1.3,1.3) {$+$};
\node[left] at (2.1,1.3) {$+$};
\node[below] at (1.5,1) {$-$};

\node at(1.7,1.1){$T_3$};

\node[left] at (1,1.5) {$-$};
\node[left] at (1.55,1.5) {$+$};
\node[below] at (1.3,2.1) {$+$};

\node at(1.1,1.7){$T_2$};

\draw[fill] (2,2) circle [radius=0.05];
\node[below right] at (2,2) {$A$};


\path 	coordinate (aa2b1) at (5,1)
coordinate (aa3b1) at (6,1)
coordinate (aa4b2) at (7,2)
coordinate (aa4b3) at (7,3)
coordinate (aa3b3) at (6,3)
coordinate (aa2b2) at (5,2)
coordinate (aa3b2) at (6,2);

\draw[line width=.4pt]  (aa2b2) -- (aa3b3);
\draw[line width=.4pt]  (aa2b1) -- (aa4b3);
\draw[line width=.4pt]  (aa3b1) -- (aa4b2);
\draw[line width=.4pt]  (aa3b3) -- (aa4b3);
\draw[line width=.4pt]  (aa2b2) -- (aa4b2);
\draw[line width=.4pt]  (aa2b1) -- (aa3b1);
\draw[line width=.4pt]  (aa2b1) -- (aa2b2);
\draw[line width=.4pt]  (aa3b1) -- (aa3b3);
\draw[line width=.4pt]  (aa4b2) -- (aa4b3);

\path 	coordinate (ba2b1) at (9,1)
coordinate (ba3b1) at (10,1)
coordinate (ba4b2) at (11,2)
coordinate (ba4b3) at (11,3)
coordinate (ba3b3) at (10,3)
coordinate (ba2b2) at (9,2)
coordinate (ba3b2) at (10,2);

\draw[line width=.4pt]  (ba2b2) -- (ba3b3);
\draw[line width=.4pt]  (ba2b1) -- (ba4b3);
\draw[line width=.4pt]  (ba3b1) -- (ba4b2);
\draw[line width=.4pt]  (ba3b3) -- (ba4b3);
\draw[line width=.4pt]  (ba2b2) -- (ba4b2);
\draw[line width=.4pt]  (ba2b1) -- (ba3b1);
\draw[line width=.4pt]  (ba2b1) -- (ba2b2);
\draw[line width=.4pt]  (ba3b1) -- (ba3b3);
\draw[line width=.4pt]  (ba4b2) -- (ba4b3);


\path 	coordinate (ca2b1) at (1,-2)
coordinate (ca3b1) at (2,-2)
coordinate (ca4b2) at (3,-1)
coordinate (ca4b3) at (3,0)
coordinate (ca3b3) at (2,0)
coordinate (ca2b2) at (1,-1)
coordinate (ca3b2) at (2,-1);

\draw[line width=.4pt]  (ca2b2) -- (ca3b3);
\draw[line width=.4pt]  (ca2b1) -- (ca4b3);
\draw[line width=.4pt]  (ca3b1) -- (ca4b2);
\draw[line width=.4pt]  (ca3b3) -- (ca4b3);
\draw[line width=.4pt]  (ca2b2) -- (ca4b2);
\draw[line width=.4pt]  (ca2b1) -- (ca3b1);
\draw[line width=.4pt]  (ca2b1) -- (ca2b2);
\draw[line width=.4pt]  (ca3b1) -- (ca3b3);
\draw[line width=.4pt]  (ca4b2) -- (ca4b3);

\path 	coordinate (da2b1) at (5,-2)
coordinate (da3b1) at (6,-2)
coordinate (da4b2) at (7,-1)
coordinate (da4b3) at (7,0)
coordinate (da3b3) at (6,0)
coordinate (da2b2) at (5,-1)
coordinate (da3b2) at (6,-1);

\draw[line width=.4pt]  (da2b2) -- (da3b3);
\draw[line width=.4pt]  (da2b1) -- (da4b3);
\draw[line width=.4pt]  (da3b1) -- (da4b2);
\draw[line width=.4pt]  (da3b3) -- (da4b3);
\draw[line width=.4pt]  (da2b2) -- (da4b2);
\draw[line width=.4pt]  (da2b1) -- (da3b1);
\draw[line width=.4pt]  (da2b1) -- (da2b2);
\draw[line width=.4pt]  (da3b1) -- (da3b3);
\draw[line width=.4pt]  (da4b2) -- (da4b3);

\path 	coordinate (ea2b1) at (9,-2)
coordinate (ea3b1) at (10,-2)
coordinate (ea4b2) at (11,-1)
coordinate (ea4b3) at (11,0)
coordinate (ea3b3) at (10,0)
coordinate (ea2b2) at (9,-1)
coordinate (ea3b2) at (10,-1);

\draw[line width=.4pt]  (ea2b2) -- (ea3b3);
\draw[line width=.4pt]  (ea2b1) -- (ea4b3);
\draw[line width=.4pt]  (ea3b1) -- (ea4b2);
\draw[line width=.4pt]  (ea3b3) -- (ea4b3);
\draw[line width=.4pt]  (ea2b2) -- (ea4b2);
\draw[line width=.4pt]  (ea2b1) -- (ea3b1);
\draw[line width=.4pt]  (ea2b1) -- (ea2b2);
\draw[line width=.4pt]  (ea3b1) -- (ea3b3);
\draw[line width=.4pt]  (ea4b2) -- (ea4b3);

\draw[fill] (aa3b2) circle [radius=0.05];
\node[below right] at (aa3b2) {$A$};
\draw[fill] (ba3b2) circle [radius=0.05];
\node[below right] at (ba3b2) {$A$};
\draw[fill] (ca3b2) circle [radius=0.05];
\node[above left] at (ca3b2) {$A$};
\draw[fill] (da3b2) circle [radius=0.05];
\node[above left] at (da3b2) {$A$};
\draw[fill] (ea3b2) circle [radius=0.05];
\node[below right] at (ea3b2) {$A$};

\begin{scope}
\fill[pattern=horizontal lines] (aa2b1)--(aa3b2)--(aa3b3)--(aa2b2)--(aa2b1);
\fill[pattern=horizontal lines] (ba2b1)--(ba3b1)--(ba3b2)--(ba2b2)--(ba2b1);
\fill[pattern=horizontal lines] (ca2b1)--(ca3b1)--(ca4b2)--(ca3b2)--(ca2b1);
\fill[pattern=horizontal lines] (da3b1)--(da4b2)--(da4b3)--(da3b2)--(da3b1);
\fill[pattern=horizontal lines] (ea3b2)--(ea4b2)--(ea4b3)--(ea3b3)--(ea3b2);
\end{scope}

\end{tikzpicture}

\caption{$A$ is an interior vertex; cf. Figure \ref{fig:vertices}. Five basis functions associated with the interior vertex $A$. The shadowed parts are respectively the supports of the basis functions.}\label{fig:basisinteriorvertex}
\end{figure}

\begin{figure}[htbp]
\begin{tikzpicture}[scale=1.5]

\path 	coordinate (ca0mb0) at (0.9,0)
coordinate (ca1mb0) at (1.9,0)
coordinate (ca1mb1) at (1.9,1)

coordinate (ca1b0p) at (2,0.1)
coordinate (ca1b1p) at (2,1.1)
coordinate (ca2b1p) at (3,1.1)

coordinate (ca1pb0) at (2.1,0)
coordinate (ca2pb0) at (3.1,0)
coordinate (ca2pb1) at (3.1,1);

\draw[line width=.4pt]  (ca0mb0) -- (ca1mb0) -- (ca1mb1) --cycle;
\draw[line width=.4pt]  (ca1b0p) -- (ca1b1p) -- (ca2b1p) --cycle;
\draw[line width=.4pt]  (ca1pb0) -- (ca2pb0) -- (ca2pb1) --cycle;

\node[left] at (2,0.5) {$+$};
\node[above] at (1.5,-0.1) {$+$};
\node[left] at (1.5,.55) {$-$};

\node[right] at (1.9,.7) {$+$};
\node[left] at (2.65,.7) {$+$};
\node[above] at (2.5,1) {$-$};

\node[right] at (2.5,.5) {$+$};
\node[above] at (2.65,-0.1) {$+$};
\node[right] at (3,.5) {$-$};

\draw[fill] (2,0) circle [radius=0.05];
\node[below] at (2,0) {$C$};

\node[below] at (1.8,-0.3) {(a) no essential boundary condition};


\path 	coordinate (ca0b0) at (5,0)
coordinate (ca1b0) at (6,0)
coordinate (ca2b0) at (7,0)
coordinate (ca2b1) at (7,1)
coordinate (ca1b1) at (6,1);

\draw[line width=.4pt]  (ca2b1) -- (ca2b0) -- (ca1b0) -- (ca0b0) -- (ca1b1) -- (ca2b1) -- (ca1b0) -- (ca1b1);

\path 	coordinate (cra0b0) at (8,0)
coordinate (cra1b0) at (9,0)
coordinate (cra2b0) at (10,0)
coordinate (cra2b1) at (10,1)
coordinate (cra1b1) at (9,1);

\draw[line width=.4pt]  (cra2b1) -- (cra2b0) -- (cra1b0) -- (cra0b0) -- (cra1b1) -- (cra2b1) -- (cra1b0) -- (cra1b1);

\begin{scope}
\fill[pattern=north west lines] (ca0b0)--(ca1b0)--(ca2b1)--(ca1b1)--(ca0b0);
\fill[pattern=north west lines] (cra1b0)--(cra2b0)--(cra2b1)--(cra1b1)--(cra1b0);
\end{scope}

\draw[fill] (ca1b0) circle [radius=0.05];
\node[below] at (ca1b0) {$C$};
\draw[fill] (cra1b0) circle [radius=0.05];
\node[below] at (cra1b0) {$C$};

\node[below] at (7.5,-0.3) {(b) with essential boundary condition };

\end{tikzpicture}

\caption{ C is boundary vertex with a three-cell patch; cf. Figure \ref{fig:vertices}. The basis functions associated with $C$ are:  (a) with no essential boundary condition, supported on a single cell; the case applies for $\fV_{\rot}$; (b) with essential boundary condition, supported on the shadowed parts; the case applies for $\fV_{\dv}$.}\label{fig:boundaryvertex}
\end{figure}

\appendix

\section{Some detailed calculation}

Let $\ixalpha\in \mathbb{IX}_{k,n}$ and $\ixbeta\in\mathbb{IX}_{n-k,n}$ be such that $\ixalpha$ and $\ixbeta$ partition $\{1, . . . , n\}$. Elementary calculation leads to that 
$$
\star(\dx^{\ixalpha_1}\wedge\dots \wedge \dx^{\ixalpha_k})=(-1)^{\rm pm}(\dx^{\ixbeta_1}\wedge\dots\wedge \dx^{\ixbeta_{n-k}}),\ \ {\rm pm}=\sum_{j=1}^k\ixalpha_j-\frac{k(k+1)}{2};
$$
$$
\star(\dx^{\ixbeta_1}\wedge\dots\wedge\dx^{\ixbeta_{n-k}})=(-1)^{\rm pm}(\dx^{\ixalpha_1}\wedge\dots \wedge \dx^{\ixalpha_k}),\ \ {\rm pm}=\sum_{j=1}^{n-k}\ixbeta_j-\frac{(n-k)(n-k+1)}{2};
$$
\begin{multline*}
\dx^{\ixalpha_i}\wedge\dx^{\ixbeta_1}\wedge\dots\wedge\dx^{\ixbeta_{n-k}}=(-1)^{\rm pm}\dx^{\ixbeta_1}\wedge\dots\wedge\dx^{\ixbeta_{m-1}}\wedge\dx^{\ixalpha_i}\wedge\dx^{\ixbeta_m}\wedge\dots\wedge\dx^{\ixbeta_{n-k}},
\\
\mbox{such\ that}\ \ixbeta_{m-1}<\ixalpha_i<\ixbeta_m,\ {\rm pm}=\ixalpha_i-i. 
\end{multline*}

$$
\begin{array}{l}
\displaystyle (-1)^{kn}\odelta_k\tilde\fmu^{\ixalpha}_{\odelta,T}=\star\od^{n-k}\star\fmu^{\ixalpha}_{\odelta,T}=\star\od^{n-k}(\sum_{j=1}^k(\tilde x^{\ixalpha_j})^2\star(\dx^{\ixalpha_1}\wedge\dots\wedge\dx^{\ixalpha_k}))
\\
\displaystyle =\star\od^{n-k}[\sum_{j=1}^k(\tilde x^{\ixalpha_j})^2(-1)^{\rm pm_1}(\dx^{\ixbeta_1}\wedge\dots\wedge\dx^{\ixbeta_{n-k}})]
=\star\sum_{j=1}^k2\tilde x^{\ixalpha_j}(-1)^{\rm pm_1}\dx^{\ixalpha_j}\wedge\dx^{\ixbeta_1}\wedge\dots\wedge \dx^{\ixbeta_{n-k}}
\\
\displaystyle =\star\sum_{j=1}^k 2\tilde x^{\ixalpha_j}(-1)^{\rm pm_1}(-1)^{\rm pm_2}\dx^{\ixbeta_1}\wedge\dots\wedge \dx^{\ixbeta_{m-1}}\wedge\dx^{\ixalpha_j}\wedge\dx^{\ixbeta_m}\wedge\dots\wedge\dx^{\ixbeta_{n-k}}
\\
\displaystyle =\sum_{j=1}^k(-1)^{\rm pm_1+pm_2+pm_3}2\tilde x^{\ixalpha_j}\dx^{\ixalpha_1}\wedge\dots\wedge\dx^{\ixalpha_{j-1}}\wedge\dx^{\ixalpha_{j+1}}\wedge\dx^{\ixalpha_k}
\\
\displaystyle =2\cdot (-1)^{(k-1)(n-1)}\sum_{j=1}^k(-1)^{j+1}\tilde x^{\ixalpha_j}\dx^{\ixalpha_1}\wedge\dots\wedge\dx^{\ixalpha_{j-1}}\wedge\dx^{\ixalpha_{j+1}}\wedge\dx^{\ixalpha_k}
\\
\displaystyle =2\cdot (-1)^{(k-1)(n-1)} \okappa_T(\dx^{\ixalpha_1}\wedge\dx^{\ixalpha_2}\wedge\dots\wedge \dx^{\ixalpha_k}),
\end{array}
$$
where 
${\rm pm_1}=\sum_{j=1}^k\ixalpha_j-\frac{k(k+1)}{2}$, ${\rm pm_2}=\ixalpha_j-j$, ${\rm pm_3}=\ixalpha_j+\sum_{l=1}^{n-k}\ixbeta_l-\frac{(n-k+1)(n-k+2)}{2}$, and ${\rm pm_1}+{\rm pm_2}+{\rm pm_3}=2\ixalpha_j-(k-1)(n-1)-(j+1).$
Namely 
\begin{equation}\label{eq:Aodelta}
\odelta_k\tilde\fmu^{\ixalpha}_{\odelta,T}=(-1)^n\cdot 2\okappa_T(\dx^{\ixalpha_1}\wedge\dx^{\ixalpha_2}\wedge\dots\wedge \dx^{\ixalpha_k}). 
\end{equation}

$$
\begin{array}{rl}
\od^k\tilde\fmu_{\od,T}^{\ixalpha}&\displaystyle=\sum_{j=1}^{n-k}2\tilde x^{\ixbeta_j}\dx^{\ixbeta_j}\wedge\dx^{\ixalpha_1}\wedge\dots\wedge\dx^{\ixalpha_k}
\\
&\displaystyle=\sum_{j=1}^{n-k}2\tilde x^{\ixbeta_j}(-1)^{\rm pm_1}\dx^{\ixalpha_1}\wedge\dots\wedge\dx^{\ixalpha_{m-1}}\wedge\dx^{\ixbeta_j}\wedge\dx^{\ixalpha_m}\wedge\dots\wedge\dx^{\ixalpha_k}\ \ \ (\ixalpha_{m-1}<\ixbeta_j<\ixalpha_m)
\\
&\displaystyle=\sum_{j=1}^{n-k} 2\tilde x^{\ixbeta_j}(-1)^{\rm pm_1}(-1)^{\rm pm_2}\star(\dx^{\ixbeta_1}\wedge\dots\wedge\dx^{\ixbeta_{j-1}}\wedge\dx^{\ixbeta_{j+1}}\wedge\dots\wedge\dx^{\ixbeta_{n-k}})
\\
&\displaystyle=2\cdot(-1)^{\rm pm_3} \star(\sum_{j=1}^{n-k}\tilde x^{\ixbeta_j}(-1)^{j+1}\dx^{\ixbeta_1}\wedge\dots\wedge\dx^{\ixbeta_{j-1}}\wedge\dx^{\ixbeta_{j+1}}\wedge\dots\wedge\dx^{\ixbeta_{n-k}})
\\
&\displaystyle=2\cdot(-1)^{\rm pm_3}  \star(\okappa_T(\dx^{\ixbeta_1}\wedge\dots\wedge \dx^{\ixbeta_{n-k}})) =2\cdot(-1)^{\rm pm_3} (-1)^{\rm pm_4} \star(\okappa_T(\star (\dx^{\ixalpha_1}\wedge\dots\dx^{\ixalpha_k})))
\\
&\displaystyle=-2(-1)^{(n-k)(k+1)}\star(\okappa_T(\star (\dx^{\ixalpha_1}\wedge\dots\dx^{\ixalpha_k}))),
\end{array}
$$
where ${\rm pm_1}=\ixbeta_j-j$, ${\rm pm_2}=\sum_{i=1}^{j-1}\ixbeta_i+\sum_{i=j+1}^{n-k}\ixbeta_i-\frac{1}{2}(n-k-1)(n-k)$, ${\rm pm_3}=\sum_{i=1}^{n-k}\ixbeta_i-\frac{1}{2}(n-k-1)(n-k)+1$, and ${\rm pm_4}=\sum_{i=1}^k\ixalpha_i-\frac{1}{2}k(k+1)$. 
Namely
\begin{equation}\label{eq:Aod}
\od^k\tilde\fmu_{\od,T}^{\ixalpha}=2(-1)^{n(1+k)+1}\star(\okappa_T(\star (\dx^{\ixalpha_1}\wedge\dots\dx^{\ixalpha_k}))).
\end{equation}

\end{document}